\setlist[enumerate,1]{label=(\roman*)}
\numberwithin{equation}{section}
\theoremstyle{plain}
\newtheorem{theorem}    {Theorem}[section]
\newtheorem{lemma}      [theorem] {Lemma}
\newtheorem{conjecture} [theorem] {Conjecture}
\newtheorem{corollary}  [theorem] {Corollary}
\theoremstyle{definition}
\newtheorem{definition} [theorem] {Definition}
\newtheorem{claim}      [theorem] {Claim}
\newtheorem{question}   [theorem] {Question}
\newtheorem{remark}    [theorem]{Remark}
\newcommand{\defined}{\mathrel{\coloneqq}}
\renewcommand{\le}{\leqslant}
\renewcommand{\leq}{\leqslant}
\renewcommand{\ge}{\geqslant}
\renewcommand{\geq}{\geqslant}
\let\oldexists\exists
\let\exists\relax
\DeclareMathOperator{\exists}{\oldexists}
\let\oldforall\forall
\let\forall\relax
\DeclareMathOperator{\forall}{\oldforall}
\newcommand{\st}{\mathbin{\colon}}
\DeclarePairedDelimiter{\set}{\lbrace}{\rbrace}
\DeclarePairedDelimiter{\card}{\lvert}{\rvert}
\DeclareMathOperator{\ind}{\mathbf{1}}
\newcommand{\from}{\colon}
\newcommand{\mod}[1]{\ (\mathrm{mod}\ #1)}
\DeclarePairedDelimiterX{\abs}[1]
  {\lvert}{\rvert}{\ifblank{#1}{\,\cdot\,}{#1}}
\DeclarePairedDelimiterX{\norm}[1]
  {\lVert}{\rVert}{\ifblank{#1}{\,\cdot\,}{#1}}
\DeclarePairedDelimiterX{\inner}[2]
  {\langle}{\rangle}{\ifblank{#1}{\,\cdot\,}{#1},\ifblank{#2}{\,\cdot\,}{#2}}
\DeclareMathOperator{\Bin}{Bin}
\DeclareMathDelimiter{\given}
  {\mathbin}{symbols}{"6A}{largesymbols}{"0C}
\DeclareMathOperator{\Prob}{\mathbb{P}}
\DeclarePairedDelimiterXPP{\prob}[1]
  {\Prob}{\lparen}{\rparen}{}
  {\renewcommand{\given}{\nonscript\;\delimsize\vert\nonscript\;\mathopen{}}
  \ifblank{#1}{\,\cdot\,}{#1}}
\DeclareMathOperator{\Expec}{\mathbb{E}}
\DeclarePairedDelimiterXPP{\expec}[1]
  {\Expec}{\lparen}{\rparen}{}
  {\renewcommand{\given}{\nonscript\;\delimsize\vert\nonscript\;\mathopen{}}
  \ifblank{#1}{\,\cdot\,}{#1}}
\DeclareMathOperator{\Var}{Var}
\DeclarePairedDelimiterXPP{\var}[1]
  {\Var}{\lparen}{\rparen}{}
  {\renewcommand{\given}{\nonscript\;\delimsize\vert\nonscript\;\mathopen{}}
  \ifblank{#1}{\,\cdot\,}{#1}}
\DeclareMathOperator{\Cov}{Cov}
\DeclarePairedDelimiterXPP{\cov}[2]
  {\Cov}{\lparen}{\rparen}{}{#1,#2}
\newcommand{\EE}{\mathbb{E}}
\newcommand{\NN}{\mathbb{N}}
\newcommand{\RR}{\mathbb{R}}
\newcommand{\cB}{\mathcal{B}}
\newcommand{\cM}{\mathcal{M}}
\newcommand{\cP}{\mathcal{P}}
\newcommand{\cW}{\mathcal{W}}
\title{Counting Hamiltonian Cycles in Dirac Hypergraphs}
\author{Asaf Ferber \thanks{Department of Mathematics, University of California, Irvine.
		Email: \href{mailto:asaff@uci.edu} {\nolinkurl{asaff@uci.edu}}.
		Research supported in part by NSF Awards DMS-1954395 and DMS-1953799.}\and Liam Hardiman \thanks{Department of Mathematics, University of California, Irvine.
		Email: \href{mailto:lhardima@uci.edu} {\nolinkurl{lhardima@uci.edu}}.}\and Adva Mond \thanks{Department of Pure Mathematics and Mathematical Statistics (DPMMS), University of Cambridge, Wilberforce Road, Cambridge, CB3 0WA, United Kingdom. Email: \href{mailto:am2759@cam.ac.uk}{\nolinkurl{am2759@cam.ac.uk}}}}
\date{\today}
\begin{document}
	
	\maketitle
	
	\begin{abstract}
		
		For $0\leq \ell <k$, a Hamiltonian $\ell$-cycle in a $k$-uniform hypergraph $H$ is a cyclic ordering of the vertices of $H$ in which the edges are segments of length $k$ and every two consecutive edges overlap in exactly $\ell$ vertices.
		We show that for all $0\le \ell<k-1$, every $k$-graph with minimum co-degree $\delta n$ with $\delta>1/2$ has (asymptotically and up to a subexponential factor) at least as many Hamiltonian $\ell$-cycles as in a typical random $k$-graph with edge-probability $\delta$.
		This significantly improves a recent result of Glock, Gould, Joos, K\"uhn, and Osthus, and verifies a conjecture of Ferber, Krivelevich and Sudakov for all values $0\leq \ell<k-1$.
		
	\end{abstract}

	\section{Introduction}
	A classical theorem of Dirac \cite{Dirac} states that any graph on $n\geq 3$ vertices with minimum degree at least $n/2$ is Hamiltonian. We call graphs that meet this minimum degree requirement \emph{Dirac graphs}.
	The complete bipartite graph $K_{n,n+1}$ is an extremal example for the tightness of this minimum degree condition.
	Moreover, since adding one edge to it already creates many Hamiltonian cycles, this suggests that Dirac graphs might contain not only one, but many Hamiltonian cycles.
	This leads to the question of \emph{how many} Hamiltonian cycles are contained in a Dirac graph.
	In a seminal paper by S\'ark\"ozy, Selkow and Szemer\'{e}di~\cite{SSS} it was proven that $n$-vertex Dirac graphs contain at least $c^n n!$ many distinct Hamiltonian cycles for some small positive constant $c$.
	As this is clearly the correct order of magnitude, one could further ask for the correct value of the constant $c$.
	
	To come up with an intelligent guess for the value of $c$, consider a binomial random graph $G_{n,p}$ on $n$ vertices wherein each edge appears independently with probability $p$.
	It is easy to show (for example by Chernoff bounds) that with high probability (i.e., with probability tending to 1 as $n\to \infty$) its minimum degree is $(1-o(1))np$, and that the expected number of Hamiltonian cycles is
	\begin{equation}
	\label{eq:expected}
	\frac{1}{2}(n-1)!p^n=(1-o(1))^n n!p^n.
	\end{equation}
	(It is not so easy to show concentration though! See Janson~\cite{JansonConcentration}.) This hints that we might take $c\approx\delta(G)/n$, where $\delta(G)$ denotes the the minimum degree in $G$. 
	Indeed, Cuckler and Kahn \cite{CucklerKahnCycles} impressively showed that $c\approx\delta(G)/n$ is the correct constant, thereby closing the case completely.
	
	It is natural to extend Cuckler and Kahn's result to the hypergraph setting. First, let us introduce a notion of cycle in hypergraphs.
	For positive integers $0\leq \ell < k$, we define a $(k, \ell)$-cycle to be a $k$-uniform hypergraph (or a ``$k$-graph'' for short) whose vertices may be ordered cyclically such that its edges are segments of length $k$ and every two consecutive edges overlap in exactly $\ell$ vertices.
	A $(k,\ell)$-cycle which contains all the vertices of a given $k$-graph is called a \emph{Hamiltonian $\ell$-cycle}.
	We say that a $k$-graph is \emph{$\ell$-Hamiltonian} if it contains a Hamiltonian $\ell$-cycle.
	When $\ell = k-1$ we often refer to an $\ell$-cycle as a \emph{tight cycle}, and we say that a $k$-graph is \emph{tight Hamiltonian} or contains a \emph{Hamiltonian tight cycle}, accordingly.
	In light of Dirac's theorem, we also consider the more general notion of degrees in hypergraphs.
	We say that the \emph{co-degree} of a $(k-1)$-set $X$ in a $k$-graph $H$ is the number of edges in $H$ that contain $X$.

	There has been much work on analogues of Dirac's theorem in the hypergraph setting. Initial results were due to Katona and Kierstead~\cite{KatonaKierstead}.
	Later it was shown that the necessary minimum degree for a $k$-graph $H$ to be $\ell$-Hamiltonian is $\delta_{k-1}(H) \approx n/2$, for $\ell=k-1$~\cite{RRS08,RRS11}, and more generally for $\ell$ satisfying $(k-\ell) \mid k$~\cite{MR11}.
	For values of $\ell$ satisfying $(k-\ell) \nmid k$, it was proven in \cite{KMO10} that the necessary minimum co-degree for $\ell$-Hamiltonicity is $\delta_{k-1}(H) \approx \frac{n}{\lceil k/(k-\ell) \rceil (k-\ell)}$.
	For more details about (many) other results regarding the minimum co-degree of a hypergraph and $\ell$-Hamiltonicity, we refer the reader to the excellent surveys by R\"{o}dl and Ruci\'{n}ski~\cite{RRsurvey} and by K\"{u}hn and Osthus~\cite{KOsurvey}.
	
	In light of these results, and since we consider Hamiltonian $\ell$-cycles for various values of $\ell$, we say that $H$ is \emph{$\delta$-Dirac} if $\delta_{k-1}(H) \ge \delta n$ for $\delta > 1/2$.
	
	A natural guess for the correct lower bound on the number of Hamiltonian $\ell$-cycles in a $\delta$-Dirac graph is the expected number of Hamiltonian $\ell$-cycles in a random hypergraph with edge density $\delta$. That is, we hope to obtain a lower bound of the form
	\begin{equation}
	\label{eq:expectedell}
	(1-o(1))^n \cdot \Psi_k(n, \ell) \cdot \delta^{\frac{n}{k-\ell}},
	\end{equation}
	where $\Psi_k(n, \ell)$ is the number of Hamiltonian $\ell$-cycles in the complete $k$-graph on $n$ vertices.
	Ferber, Krivelevich and Sudakov \cite{FerberKrivelevichSudakov16} realized this hope in the case where $\ell \le k/2$.
	Quite recently, Glock, Gould, K\"{u}hn and Osthus \cite{GGJKO} showed that a $\delta$-Dirac $k$-graph contains at least $(1-o(1))^n n! c^n$ Hamiltonian $\ell$-cycles, for all values $\ell$ and for some small constant $c>0$.
	Our contribution is that (\ref{eq:expectedell}) is the correct lower bound for all values of $0\leq \ell < k-1$ in any $\delta$-Dirac $k$-graph.
	
	\begin{theorem}
		\label{thm:main}
		Let $\ell,k\in \mathbb{N}$ be such that $0 \le \ell < k-1$, and let $n$ be a sufficiently large integer which is divisible by $k-\ell$.
		Then the number of Hamiltonian $\ell$-cycles in a $\delta$-Dirac $k$-graph $H$ on $n$ vertices is at least
		\[
		(1-o(1))^n \cdot \Psi_k(n, \ell) \cdot \delta^{\frac{n}{k-\ell}}.
		\]
	\end{theorem}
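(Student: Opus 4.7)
The plan is to combine the absorbing method with a random greedy construction that tracks the sharp edge-density factor $\delta$, in the spirit of Ferber, Krivelevich and Sudakov's argument~\cite{FerberKrivelevichSudakov16} but pushed through the entire extension phase. Specifically, I would build Hamilton $\ell$-cycles in four stages: (i) construct an \emph{absorbing} $\ell$-path $P_A$ on $o(n)$ vertices which, for any sufficiently small $X \subseteq V(H)\setminus V(P_A)$, can be rewired into an $\ell$-path on $V(P_A)\cup X$ with the same two endpoints; (ii) set aside a small random reservoir $R$; (iii) grow an $\ell$-path from one endpoint of $P_A$ by random greedy extension, adding $k-\ell$ new vertices at a time while counting the number of valid choices; (iv) use $R$ to connect the terminal end back to the other endpoint of $P_A$ and absorb all remaining vertices via the defining property of $P_A$.

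The key lemma underlying the sharp factor $\delta^{n/(k-\ell)}$ is a \emph{codegree-to-$\ell$-degree inheritance}: in a $\delta$-Dirac $k$-graph, every $\ell$-subset $L \subseteq V(H)$ lies in at least $(1-o(1))\delta\binom{n-\ell}{k-\ell}$ edges, proved by double-counting pairs $(T,e)$ with $L\subseteq T\subseteq e$ and $|T|=k-1$, applying the co-degree hypothesis to each $T$. At step $i$ of the greedy, applied to the $\ell$ ``active'' vertices $L_i$ at the growing end, this yields at least $(1-o(1))\delta(k-\ell)!\binom{n-\ell}{k-\ell}$ ordered $(k-\ell)$-tuples of unused vertices extending $L_i$ to an edge of $H$. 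Taking the product over the $n/(k-\ell)$ greedy steps and dividing by the overcounting coming from cyclic rotation, reflection, and any within-edge symmetries of the cycle, one recovers the target bound $(1-o(1))^n\,\Psi_k(n,\ell)\,\delta^{n/(k-\ell)}$, provided that deletion of $V(P_A)\cup R$ only degrades the inheritance estimate by a $(1-o(1))$ factor, which holds since $|V(P_A)\cup R| = o(n)$ and $\delta > 1/2$.

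The main obstacle is constructing the absorbing path $P_A$ in a way that is compatible with this sharp counting, and making it work for the full range $0\leq \ell<k-1$. The looseness $k-\ell\ge 2$ leaves room to build a Montgomery-style \emph{distributive} absorbing path: an auxiliary bipartite graph pairing candidate absorbing gadgets with potential absorbees, for which a Hall-type argument (again exploiting $\delta>1/2$) guarantees that any sufficiently small leftover set can be absorbed in a single rewiring. I expect the genuinely hardest step to be pushing the greedy counting through for $\ell > k/2$, where consecutive edges share more than half their vertices so that $L_i$ retains substantial information about choices made many steps ago, unlike in the FKS regime $\ell \le k/2$. To break this dependence I would work with a pseudo-random partition of $V(H)$ into $k-\ell$ classes and require that the $j$-th new vertex at each extension step come from the $j$-th class, effectively decoupling consecutive extensions into bipartite-style choices. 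Together with a bounded-differences or Chernoff-type concentration guaranteeing that the inheritance estimate survives along the greedy trajectory with high probability, this should yield a sharp count uniformly in $0\leq \ell<k-1$.
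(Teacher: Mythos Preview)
Your central counting step contains a real gap. You claim that at step $i$ of the greedy there are at least $(1-o(1))\delta(k-\ell)!\binom{n-\ell}{k-\ell}$ ordered $(k-\ell)$-tuples of \emph{unused} vertices extending $L_i$ to an edge, but your inheritance lemma only bounds the number of extensions in the \emph{whole} vertex set. After $i$ greedy steps a set $U_i$ of size $(k-\ell)i$ has been consumed, and the codegree of any $(k-1)$-set into $V\setminus U_i$ is only guaranteed to be at least $\delta n-|U_i|$. Running the double-count inside $V\setminus U_i$ therefore yields at most $(1-o(1))\bigl(1-(1-\delta)n/n_i\bigr)(n_i)_{k-\ell}$ ordered extensions, where $n_i=n-(k-\ell)i$; this factor is not $\delta$ and in fact vanishes once $n_i\approx(1-\delta)n$. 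So a naive product over $n/(k-\ell)$ steps does not give $\delta^{n/(k-\ell)}$, and the greedy stalls long before you reach the $o(n)$-sized leftover that your absorber can swallow. Shrinking the leftover helps the absorber but hurts the count; enlarging it helps the greedy but costs a $\delta^{\Theta(n)}$ factor in the absorbed part. Your proposed $(k-\ell)$-class partition does not address this, since each class is still depleted linearly along the greedy.

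The paper sidesteps the degradation problem by abandoning the single-path greedy altogether. It takes a random equipartition of $V'=V\setminus W$ into $s=n'/m$ parts of size $m=\omega(\log n)$ and builds many $\ell$-paths \emph{transversally}: the $j$-th path picks its $t$-th vertex from $V_t$. Extending all $m$ paths by one block simultaneously becomes the problem of counting perfect matchings in the $k$-partite $k$-graph $H[V_{(s-1)(k-\ell)+1},\ldots,V_{s(k-\ell)+\ell}]$ with the first $\ell$ coordinates prescribed. The key reduction (their Lemma~3.3) fixes the first $k-2$ permutations, lets the $(k-1)$-st be uniformly random, and uses McDiarmid's inequality to show the resulting auxiliary \emph{bipartite} graph on $\mathcal{M}_\pi$ versus $V_k$ has minimum degree $(\delta-o(1))m$. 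The sharp factor $\delta^m$ per block then comes for free from Cuckler--Kahn's count of perfect matchings in bipartite Dirac graphs, not from any greedy. Because every block sees fresh parts of the random equipartition, the Dirac parameter never degrades: each layer inherits $\delta^*_{k-1}\geq(\delta-o(1))m$ by Chernoff. This is exactly why the argument needs $\ell\le k-2$ (one spare coordinate must remain random for the McDiarmid step) and why the tight case $\ell=k-1$ is left open. If you want to rescue a greedy-style proof, you would need a substitute for Cuckler--Kahn that controls the per-step factor \emph{after} a linear fraction of vertices has been removed; nothing in your proposal supplies this.
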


\section{Proof outline}
	For $m\in \NN$ and $X\subset V(H)$ we define an $(\ell,m,X)$-\emph{path-system} to be an ordered collection of $m$ many vertex-disjoint $\ell$-paths that cover $X$, and let $\cP(\ell,m,X)$ be the collection of all $(\ell,m,X)$-path-systems in $H$ (see \Cref{def:ellpathsstm}).
	Our proof is largely based on the following three steps:
	$(i)$ We remove a small subset $W\subset V$ with certain properties. This set will be used to tailor path-systems into Hamiltonian cycles.
	$(ii)$ We show that, for an appropriate choice of $m$,  $\card{\cP(\ell,m,V\setminus W)}$  is at least as large as the number of Hamiltonian $\ell$-cycles we eventually want.
	$(iii)$ We show that each $P\in \cP (\ell,m,V\setminus W)$ can be tailored into a Hamiltonian $\ell$-cycle using the vertices in $W$ in such a way that distinct path-systems correspond to distinct cycles.
	Clearly, it then follows that the number of Hamiltonian $\ell$-cycles in $H$ is at least the size of $\cP(\ell, m, V\setminus W)$, as required.
	
	\Cref{sec:PthToCcle} is dedicated to steps $(i)$ and $(iii)$, which mainly follow from other results (mostly stated in ~\cite{GGJKO}).
	More specifically, in \Cref{lem:ConnSstm} we prove that our $k$-graph contains such a subset $W\subset V$, and in \Cref{lem:PthsToCycl} we show how to tailor a path-system into a Hamiltonian $\ell$-cycle, using the set $W$.
	
	Our main contribution is in step $(ii)$, where the goal is to construct ``many'' path-systems, each of which covers all the vertices in $V' \defined V\setminus W$.
	As mentioned above, we use $W$ to tailor each path-system into exactly one Hamiltonian $\ell$-cycle.
	Therefore, if we let $x \defined \card{W}$, then we clearly cannot have more than $\left(\Theta(n-x) \right)^{n-x}$ many path-systems in $V'$, as this is at most how many Hamiltonian cycles one can have on $n-x$ vertices.
	Since the desired lower bound on the number of Hamiltonian $\ell$-cycles is of order $\left(\Theta(n) \right)^n$, we require $(n-x)^{-x}=(1-o(1))^n$.
	Therefore, we must choose $x=o\left(\frac{n}{\log n} \right)$.
	Moreover, since we need to tailor $m$ paths together using vertices from $W$ we also must have that $m=O(x)$.
	
	For convenience we only count (and construct) path-systems in which all paths are of exactly the same length $s \defined \frac{n-x}{m}$ (so from the above discussion we must have $s=\omega(\log n)$).
	The process of constructing a path-system goes as follows.
	First, we choose an ordered equipartition $V' = V_1\cup \ldots \cup V_s$.
	Second, we choose an ``ordered'' perfect matching $M_1=(e_1,\ldots,e_m)$ in the $k$-partite $k$-graph induced by $V_1\cup \ldots \cup V_k$, and for each $i \in [m]$ we let $X_{e_i}$ be the vertices in $e_i$ that are contained in the last $\ell$ parts $V_{k-\ell+1}, \ldots, V_k$.
	Next, we choose a perfect matching $M_2=(f_1,\ldots,f_m)$ in the $k$-partite $k$-graph induced by $V_{k-\ell+1}\cup \ldots \cup V_{2k-\ell}$ in such a way that for each $i$ we have $f_i\cap e_i=X_{e_i}$, and define $X_{f_i}$, analogously, to be the intersection of $f_i$ with the last $\ell$ parts $V_{2(k-\ell)+1}, \ldots, V_{2k-\ell}$, for each $i \in [m]$.
	We repeat this, choosing a perfect matching $M_3 = (g_1, \ldots, g_m)$ in the $k$-partite $k$-graph induced by $V_{2(k-\ell)+1}, \ldots, V_{3k-2\ell}$ such that $g_i \cap f_i = X_{f_i}$ and define $X_{g_i}$ analogously.
	We continue this way, considering the next $k$ parts of the partition in steps of size $k-\ell$, until we cover $V'$.
	Clearly, the union of all the $M_i$'s is an $(\ell, m, V')$-path-system (the order of the paths is induced by the order on $M_1$).
	
	Our goal is to show that this process yields many path-systems.
	Hence, the main ``building block'' in our counting argument will be finding many perfect matchings in $k$-partite $k$-graphs, where the intersection of each edge with the first $\ell$ parts is determined.
	\Cref{sec:PMs} is dedicated to showing that this is possible when considering a $\delta$-Dirac $k$-partite $k$-graph (where the Dirac property applies only for $(k-1)$-sets with vertices in distinct parts).
	We prove \Cref{lem:kDirac}, and as a consequence we get \Cref{cor:ManyPM}, showing that many perfect matchings can indeed be found in each step of this process.
	
	Having proved our ``building block'' in \Cref{cor:ManyPM}, we describe in detail the process of constructing many path-systems in \Cref{sec:ellpaths}.
	We first prove \Cref{lem:ManyPrtns}, showing that for an appropriate choice of the parameter $m$, most of the ordered equipartitions inherit the Dirac-property of our $k$-graph (as $s$-partite induced $k$-graphs).
	We then prove \Cref{lem:ManyPths}, showing that for each such ``good'' equipartition we can construct many distinct path-systems by concatenating perfect matchings from each step in process described above.
	We conclude the section with \Cref{cor:ManyStms}, where we combine both lemmata to get the ``correct'' number of path-systems in our $k$-graph.
	
	Lastly, in \Cref{sec:proof} we tie everything together, showing that \Cref{lem:ConnSstm}, \Cref{cor:ManyStms}, and \Cref{lem:PthsToCycl} imply our result for an appropriate choice of parameters.

\section{Auxiliary results} 
	
	\subsection{Concentration inequalities}
	We use two probabilistic tools.
	The first one is the known result by Chernoff, bounding the lower and the upper tails of the Binomial distribution (see~\cite{AlonSpencer,JRLrandomgraphs}).
	\begin{lemma}[Chernoff bound]
		\label{Chernoff}
		Let $X\sim \Bin(n, p)$ and let $\EE[X] = \mu$.
		Then
		\begin{itemize}
			\item $\Pr[X < (1-\delta)\mu]<e^{-\delta^2\mu/2}$ for every $\delta>0$;
			\item $\Pr[X>(1+\delta)\mu] < e^{-\delta^2\mu/3}$ for every $0<\delta<3/2$.
		\end{itemize}
	\end{lemma}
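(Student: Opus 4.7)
The plan is straightforward, since this is a textbook result and in a polished write-up I would simply defer to \cite{AlonSpencer,JRLrandomgraphs}. The proof proceeds via the classical exponential moment (Markov) method of Chernoff and Bernstein. Write $X = X_1 + \cdots + X_n$ with the $X_i$ independent $\Ber(p)$, so that for every $t \in \RR$,
\[
\EE[e^{tX}] = \prod_{i=1}^{n} \EE[e^{tX_i}] = (1-p+pe^t)^n \le \exp\bigl(np(e^t-1)\bigr) = \exp\bigl(\mu(e^t-1)\bigr),
\]
where the inequality uses $1+x \le e^x$.

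For the upper tail, apply Markov's inequality to $e^{tX}$ for an arbitrary $t > 0$:
\[
\Pr[X > (1+\delta)\mu] \le e^{-t(1+\delta)\mu}\,\EE[e^{tX}] \le \exp\bigl(\mu(e^t-1) - t(1+\delta)\mu\bigr).
\]
Optimizing in $t$ by choosing $t = \ln(1+\delta)$ gives the exponential rate $-\mu\bigl((1+\delta)\ln(1+\delta) - \delta\bigr)$, and then a one-variable calculus check shows $(1+\delta)\ln(1+\delta)-\delta \ge \delta^2/3$ for $0 < \delta < 3/2$, which yields the stated bound $e^{-\delta^2\mu/3}$.

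For the lower tail, run the same argument with $t < 0$ (equivalently, apply the upper-tail argument to $-X$). This produces $\Pr[X < (1-\delta)\mu] \le \exp\bigl(-\mu(\delta + (1-\delta)\ln(1-\delta))\bigr)$ for $0 < \delta < 1$, and the corresponding calculus inequality $(1-\delta)\ln(1-\delta) + \delta \ge \delta^2/2$ on $(0,1)$ delivers $e^{-\delta^2\mu/2}$; the better constant $1/2$ versus $1/3$ reflects the fact that the lower-tail Kullback--Leibler rate function dominates the upper-tail one by a factor of $3/2$ near the mean.

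The only non-mechanical ingredients are the two calculus inequalities that bound the Kullback--Leibler rate functions from below by quadratics in $\delta$; these would be the "main obstacle" if one insisted on a self-contained proof, but they are routine single-variable estimates obtained from the Taylor expansion of $\ln(1+\delta)$ around $0$. Everything else — factorization of the MGF, the exponential inequality $1+x \le e^x$, Markov's inequality, and optimization of $t$ — is entirely mechanical, which is why the bound is standardly invoked as a black box in the literature.
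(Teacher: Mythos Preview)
Your proof is correct and is precisely the standard exponential-moment argument found in the references the paper cites; the paper itself does not prove this lemma but simply states it with a pointer to \cite{AlonSpencer,JRLrandomgraphs}. One small omission: for the lower tail you handle $0<\delta<1$, whereas the lemma is stated for all $\delta>0$; you should note that for $\delta\ge 1$ the event $\{X<(1-\delta)\mu\}$ is empty (as $X\ge 0$), so the bound holds trivially.
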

	
	\begin{remark}
		The above bounds also hold when $X$ is a hypergeometric random variable.
	\end{remark}
	
	Our second probabilistic tool is an application of a concentration inequality by McDiarmid~\cite{McDiarmidAzuma}, proved originally by Maurey~\cite{Maurey} as one of the first uses of concentration inequality outside of probability theory.
	\begin{theorem}
		\label{thm:McDiarmid}
		Let $S_n$ be the group of permutations over a set of $n$ elements, and let $h \from S_n \to \RR$.
		Assume that for some constant $c$ we have that $|h(\pi) - h(\pi')| \le c$ for any $\pi, \pi' \in S_n$ which are obtained from one another by swapping two elements.
		Then for any $t \ge 0$ we have
		\begin{align*}
		\Pr \left[h(\pi) \le \EE(h(\pi)) - t \right] \le \exp \left(-\frac{t^2}{c^2 n} \right).
		\end{align*}
	\end{theorem}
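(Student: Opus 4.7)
The plan is to establish the concentration bound via the standard bounded-differences approach: expose the random permutation one coordinate at a time, form the associated Doob martingale, show its increments are bounded by $c$, and then apply Azuma's inequality. Concretely, let $\pi$ be a uniformly random element of $S_n$ and, for $i = 0, 1, \ldots, n$, define
\begin{align*}
Z_i \defined \EE\left(h(\pi) \mid \pi(1), \ldots, \pi(i)\right),
\end{align*}
so that $(Z_i)_{i=0}^n$ is a martingale with $Z_0 = \EE(h(\pi))$ and $Z_n = h(\pi)$. Once the increments $|Z_i - Z_{i-1}|$ are bounded by $c$ almost surely, Azuma's inequality yields $\Pr[Z_n - Z_0 \le -t] \le \exp(-t^2/(2c^2 n))$, which is at least as strong as the claimed bound.

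The main obstacle, to which essentially all the content of the proof reduces, is verifying $|Z_i - Z_{i-1}| \le c$ almost surely. I would handle this via a coupling. Fix the prefix $\pi(1), \ldots, \pi(i-1)$, and pick any two admissible values $a, b$ for the next coordinate. Sample a tail $\sigma$ uniformly conditioned on $\pi(i) = a$; there is then a unique position $j > i$ at which the value $b$ appears in $\sigma$. Let $\sigma'$ be obtained from $\sigma$ by swapping the entries at positions $i$ and $j$. A short symmetry argument shows that $\sigma'$ is then distributed uniformly subject to $\pi(i) = b$, so this is a valid coupling of the two conditional distributions on tails. Moreover, $\sigma$ and $\sigma'$ differ by swapping two elements, so the Lipschitz hypothesis gives $|h(\sigma) - h(\sigma')| \le c$. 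Averaging over the common tail distribution shows that the conditional expectations of $h(\pi)$ given $\pi(i) = a$ and given $\pi(i) = b$ differ by at most $c$, which in turn gives the required bound on $|Z_i - Z_{i-1}|$.

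With the increment bound in hand, the Azuma application is routine and concludes the proof. The delicate point is the coupling: one must be careful that swapping entries at two positions of a permutation genuinely corresponds to the ``swap two elements'' operation covered by the Lipschitz hypothesis (equivalently, right-multiplication of $\pi$ by a transposition), and that the coupling preserves the conditional uniform distribution. Once these are verified, there is nothing left to do.
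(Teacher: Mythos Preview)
The paper does not actually prove \Cref{thm:McDiarmid}: it is quoted as a known concentration inequality, attributed to McDiarmid and Maurey via citations, and used as a black box in the proof of \Cref{lem:kDirac}. So there is no ``paper's own proof'' to compare against.

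That said, your proposal is correct and is precisely the classical argument behind this inequality. The Doob martingale obtained by exposing $\pi(1),\ldots,\pi(n)$ one at a time, together with the transposition coupling you describe, is exactly Maurey's method; the increment bound $|Z_i-Z_{i-1}|\le c$ follows because, conditioned on the prefix, the value $Z_{i-1}$ is a convex combination of the quantities $\EE[h(\pi)\mid \pi(i)=a]$ over admissible $a$, any two of which differ by at most $c$ by your coupling. Azuma then gives the bound $\exp(-t^2/(2c^2 n))$, which as you note is even slightly sharper than the form stated. Your caveat about left- versus right-multiplication by a transposition is well placed but harmless: the two notions of ``differ by a swap'' define the same set of pairs $(\pi,\pi')$, so the Lipschitz hypothesis applies either way.
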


	\subsection{The number of perfect matchings in $k$-partite $k$-graphs}
	\label{sec:PMs}
	In this section we show that in a ``$\delta$-Dirac $k$-partite'' $k$-graph, one can find ``many'' perfect matchings, even when the intersection of each edge in each matching with the first $\ell$ parts is determined.
	This is our main ``building block'' for constructing many path-systems in the proof of the main result.
	We show this as a corollary of a more general statement, \Cref{lem:kDirac}, which is a consequence of the concentration inequality given in \Cref{thm:McDiarmid}.
	The focus of this section is proving \Cref{lem:kDirac}, which allows us to reduce the problem to the bipartite case in graphs.
	This is a version of an idea from \cite{FH}.
	Then, by using a known result by Cuckler and Kahn~\cite{CucklerKahnCycles}, we deduce \Cref{cor:ManyPM}.
	
	We start with introducing some further definitions and notation.
	Let $H$ be a $k$-partite $k$-graph with parts $V_1, \ldots, V_k$, all of size $m$.
	Let $\pi = (\pi_1, \ldots, \pi_{k-1})$, where $\pi_i \from [m] \to V_i$ is a permutation on the vertices in $V_i$ for each $i \in [k-1]$.
	Let $\cM_{\pi}$ be the collection of $(k-1)$-sets that intersect all $V_1, \ldots, V_{k-1}$, induced by $\pi$.
	More precisely, let
	\begin{align*}
	\cM_{\pi} \defined \set[\big]{\set{\pi_1 (j), \ldots, \pi_{k-1}(j) } \st j \in [m]}.
	\end{align*}
	Define the auxiliary graph $\cB_{\pi}(H)$ to be the bipartite graph with parts $\cM_{\pi}$ and $V_k$, and such that $xv$ is an edge for $x \in \cM_{\pi}$ and $v\in V_k$ if and only if $x \cup v \in E(H)$.
	Taking further $\pi_k \from [m] \to V_k$ to be a permutation on the vertices in $V_k$, we say that the tuple of $k$ permutations $(\pi_1, \ldots, \pi_k)$ \emph{induces} a perfect matching in $H$, if the set of edges
	\begin{align*}
	\set[\big]{\set{\pi_1(j), \ldots, \pi_k(j)} \st j\in [m]}
	\end{align*}
	is a perfect matching in $H$.
	
	When considering an $s$-partite $k$-graph $H$, for some $s\ge k$, it is simpler to use the following variant of the notion of minimum co-degree.
	Assume that $V_1, \ldots, V_s$ are the parts of $H$.
	For $i \in [s]$ define
	\begin{align*}
	U_i \defined \bigcup V_{j_1} \times \cdots \times V_{j_{k-1}},
	\end{align*}
	where the union goes over all $1 \le j_1 < \ldots < j_{k-1} \le s$ such that $j_1, \ldots, j_{k-1} \neq i$.
	Define further
	\begin{align*}
	\delta^*_{k-1}(H) \defined \min \set[\big]{d(X, V_i) \st i \in [s], \; X \in U_i},
	\end{align*}
	where $d(X, U_i)$ is the number of edges in $H$ that contain $X$ (viewed as a $(k-1)$-set) and intersect $U_i$ non-trivially.
	That is, $\delta_{k-1}^*(H)$ is the minimum number of edges incident to a $(k-1)$-set that intersects exactly $k-1$ parts.
	
	We can now state the main lemma of this section.
	\begin{lemma}
		\label{lem:kDirac}
		For every $\varepsilon > 0$ there exists $m_0 \in \NN$ such that the following holds for any integer $m \ge m_0$.
		Let $H$ be a $k$-partite $k$-graph with parts $V_1\cup \ldots \cup V_k$, all of size $m$.
		Suppose that $\delta^*_{k-1}(H) \ge \delta m$ for some $\varepsilon < \delta \le 1$.
		For every $i \in [k-1]$ let $\pi_i \from [m] \to V_i$ be a permutation on the vertices of $V_i$, such that $\pi_1, \ldots, \pi_{k-2}$ are fixed and $\pi_{k-1}$ is chosen uniformly at random.
		Denote $\pi \defined (\pi_1, \ldots, \pi_{k-1})$.
		Then with high probability the bipartite graph $\cB_{\pi}(H)$ has minimum degree at least $(\delta - \varepsilon) m$.
	\end{lemma}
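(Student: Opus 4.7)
The plan is to verify the minimum degree bound separately on the two sides of the bipartite graph $\cB_\pi(H)$. On the $\cM_\pi$ side the bound is deterministic and follows immediately from the hypothesis, while on the $V_k$ side it will require concentration via \Cref{thm:McDiarmid} applied to the random permutation $\pi_{k-1}$.

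First, consider a vertex $x \in \cM_\pi$. By construction, $x = \{\pi_1(j), \ldots, \pi_{k-1}(j)\}$ for some $j \in [m]$, so $x$ is a $(k-1)$-set with exactly one vertex in each of $V_1, \ldots, V_{k-1}$, and hence $x \in U_k$. By the definition of $\cB_\pi(H)$, the degree of $x$ equals $d(x, V_k)$, which is at least $\delta^*_{k-1}(H) \ge \delta m$. This holds for every choice of $\pi_{k-1}$.

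The main work is on the $V_k$ side. Fix $v \in V_k$, and define
\[
f_v(\pi_{k-1}) \defined \card[\big]{\set{j \in [m] \st \set{\pi_1(j), \ldots, \pi_{k-1}(j), v} \in E(H)}}.
\]
Since $\pi_{k-1}(j)$ is marginally uniform on $V_{k-1}$, and the set $\{\pi_1(j), \ldots, \pi_{k-2}(j), v\}$ is a $(k-1)$-set meeting each of $V_1, \ldots, V_{k-2}, V_k$ in exactly one vertex (hence lies in $U_{k-1}$), the probability that the required edge is present is at least $\delta^*_{k-1}(H)/m \ge \delta$. Summing over $j$ gives $\EE[f_v(\pi_{k-1})] \ge \delta m$. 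Furthermore, swapping two values of $\pi_{k-1}$ changes $f_v$ by at most $2$, since only two indices $j$ are affected. Applying \Cref{thm:McDiarmid} with $c = 2$ and $t = \varepsilon m / 2$ (say), we get
\[
\Pr\bigl[f_v(\pi_{k-1}) < (\delta - \varepsilon)m\bigr] \le \exp\bigl(-\Omega(\varepsilon^2 m)\bigr).
\]

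To finish, take a union bound over all $v \in V_k$; since there are only $m$ such vertices and $\exp(-\Omega(\varepsilon^2 m))$ beats $1/m$ for $m \ge m_0(\varepsilon)$, with high probability every $v \in V_k$ has degree at least $(\delta - \varepsilon)m$ in $\cB_\pi(H)$. Combined with the deterministic lower bound on the $\cM_\pi$ side, this gives the desired minimum degree. The main subtlety, and the only place that needs care, is checking that the Lipschitz constant for $f_v$ under swaps is genuinely $O(1)$ and independent of $k$ — which is immediate here since each index $j$ contributes $0$ or $1$ to $f_v$, so a single transposition of $\pi_{k-1}$ alters at most two summands.
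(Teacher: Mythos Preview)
Your proof is correct and follows essentially the same approach as the paper: deterministic bound on the $\cM_\pi$ side from the co-degree hypothesis, then for each $v\in V_k$ compute $\EE[d_{\cB_\pi}(v)]\ge \delta m$ by linearity, observe the Lipschitz constant $2$ under transpositions, and apply \Cref{thm:McDiarmid}. If anything, your version is slightly more careful in that you make the union bound over $v\in V_k$ explicit (the paper leaves it implicit); the choice $t=\varepsilon m/2$ is a bit wasteful compared to the natural $t=\varepsilon m$, but harmless since you only claim $\exp(-\Omega(\varepsilon^2 m))$.
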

	
	\begin{proof}
		Recall that the graph $\cB_{\pi} \defined \cB_{\pi}(H)$ has parts $\cM_{\pi}$ and $V_k$, and note that for every $x \in \cM_{\pi}$ we have $d_{\cB_{\pi}}(x) \ge \delta^*_{k-1}(H) \ge \delta m$.
		So it is left to show that the statement holds for vertices in $V_k$.
		Let $v \in V_k$ and consider $d_{\cB_{\pi}}(v)$.
		Since $\pi_{k-1}$ is chosen uniformly at random, for each $j \in [m]$ we have
		\begin{align*}
		\EE_{\pi_{k-1}}\left[\ind_{\left\{\{\pi_1(j) \ldots, \pi_{k-1}(j), v \} \in E(H) \right\}} \right] &= \Pr\left[\{\pi_1(j), \ldots, \pi_{k-1}(j), v \} \in E(H) \right] \\
		&= \frac{d_H \left(\{\pi_1(j), \ldots, \pi_{k-2}(j), v\}, V_{k-1} \right)}{m} \\
		&\ge \delta.
		\end{align*}
		Thus we have
		\begin{align*}
		\EE_{\pi_{k-1}}\left[d_{\cB_{\pi}}(v) \right] = \sum_{j \in [m]}\EE_{\pi_{k-1}} \left[\ind_{\left\{\{\pi_1(j), \ldots, \pi_{k-1}(j), v \} \in E(H) \right\}} \right] \ge \delta m.
		\end{align*}
		
		Now note that swapping any two elements in $\pi_{k-1}$ can change $d_{\cB_{\pi}}(v)$ by at most $2$.
		Thus, by \Cref{thm:McDiarmid} we get that
		\begin{align*}
		\Pr\left[d_{\cB_{\pi}}(v) \le (\delta - \varepsilon) m \right] \le \Pr\left[d_{\cB_{\pi}(v)} \le \EE\left[d_{\cB_{\pi}}(v) \right] - \varepsilon m \right] \le \exp\left(-\frac{\varepsilon^2 m}{4} \right) = o(1).
		\end{align*}
		Hence, with high probability the minimum degree in $\cB_{\pi}(H)$ is at least $(\delta - \varepsilon) m$.
	\end{proof}
	
	In \cite{CucklerKahnCycles} the authors provide a lower bound on the number of perfect matchings in a Dirac graph, given naturally by the lower bound on the number of Hamiltonian cycles in those graphs.
	
	\begin{theorem}[Theorems 1.5 and 3.1 in \cite{CucklerKahnCycles}]
		\label{thm:CKmtch}
		Let $G$ be a bipartite Dirac graph on parts of size $n$, and with minimum degree $d \ge n/2$.
		Then $G$ contains at least
		\begin{align*}
		(1-o(1))^n \cdot n! \cdot \left(\frac{d}{n} \right)^n
		\end{align*}
		many perfect matchings.
	\end{theorem}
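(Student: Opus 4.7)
The plan is to reduce to the case where $G$ is $d$-regular and invoke the Falikman--Egorychev resolution of the van der Waerden conjecture: every $n \times n$ doubly stochastic matrix has permanent at least $n!/n^n$.

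The regular case is immediate. If $A$ is the biadjacency matrix of a $d$-regular bipartite graph $G$, then $\frac{1}{d}A$ is doubly stochastic, so
\[
M(G) \;=\; \operatorname{perm}(A) \;=\; d^n \operatorname{perm}\!\left(\tfrac{1}{d}A\right) \;\geq\; \frac{d^n \cdot n!}{n^n} \;=\; n!\left(\frac{d}{n}\right)^n,
\]
which in fact beats the claimed bound exactly, so no $(1-o(1))^n$ slack is needed in this sub-case.

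For the general case I would pass to a spanning subgraph $G' \subseteq G$ that is \emph{nearly} $d$-regular --- all degrees lying in $[d, (1+\eps)d]$ for a small $\eps > 0$ of our choosing --- obtained by iteratively deleting an edge joining two vertices of degree strictly exceeding $d$, for as long as such an edge exists. Applying the Schrijver-type sharpening of Falikman--Egorychev (which for $k$-regular bipartite graphs yields the lower bound $\bigl((k-1)^{k-1}/k^{k-2}\bigr)^n$, and which extends to the near-regular setting) to $G'$ and combining with Stirling's formula recovers $n!(d/n)^n$ up to a subexponential factor, which is comfortably absorbed into the $(1-o(1))^n$ of the theorem.

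The main obstacle is the regularization step. A genuinely $d$-regular spanning subgraph need not exist: small bipartite Dirac graphs already show that the excess degree can be concentrated in such a way that no sequence of edge-deletions balances the two sides (for instance, when a high-degree vertex on one side is non-adjacent to the unique high-degree vertex on the other). One must therefore work with a near-regular subgraph and control the spread $\eps$ carefully. The Dirac hypothesis $d \geq n/2$ is used precisely here, since any two same-side vertices of excess degree must share a common neighbor (as $2d > n$), which opens up a swap-based rerouting argument that can reduce excess even when simple edge-deletion would push a third vertex below the minimum-degree threshold; making this quantitative enough to ensure $\eps = o(1)$ is where I would expect the proof to require the most care.
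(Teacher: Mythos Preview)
The paper does not prove this statement at all: Theorem~\ref{thm:CKmtch} is quoted as a black box from Cuckler and Kahn, and is used only through Corollary~\ref{cor:ManyPM}. So there is no ``paper's proof'' to compare your attempt to. That said, it is worth recording that the Cuckler--Kahn argument is an entropy argument, not a permanental one, and in particular does not proceed via regularisation.

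Your regular case is of course fine. The gap is exactly where you suspect, and it is a genuine obstruction rather than a technicality. Take $n=2m$, $d=m$, and build $G$ on parts $A=\{a_1,\dots,a_{2m}\}$, $B=\{b_1,\dots,b_{2m}\}$ by making $a_1$ complete to $B$, $b_1$ complete to $A$, and placing an arbitrary $(m-1)$-regular bipartite graph on $\{a_2,\dots,a_{2m}\}\times\{b_2,\dots,b_{2m}\}$. Every vertex other than $a_1,b_1$ has degree exactly $m=d$, so the only edge your greedy procedure can delete is $a_1b_1$; after that $a_1$ and $b_1$ still have degree $2m-1\approx 2d$, and any further deletion pushes some vertex strictly below $d$. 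Thus $\eps$ is bounded away from $0$, and in fact $G$ has no $d$-regular spanning subgraph whatsoever. Your swap heuristic does not rescue this: there is only one excess vertex on each side, so the ``two same-side vertices of excess degree share a neighbour'' observation is vacuous, and since you may only delete edges there is no rerouting available. A correct proof along permanental lines would have to bound $\operatorname{perm}(A)$ directly for irregular $A$ (e.g.\ via Sinkhorn scaling and a careful analysis of the scaling factors), which is substantially more work than what you have sketched; the entropy route of Cuckler--Kahn sidesteps this entirely.
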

	
	Combining \Cref{lem:kDirac} and \Cref{thm:CKmtch} we get the following corollary.
	\begin{corollary}
		\label{cor:ManyPM}
		Let $H$ be a $k$-partite $k$-graph with parts $V_1, \ldots, V_k$, all of size $m$, and suppose that $\delta^*_{k-1}(H) \ge \delta m$, for some $1/2 < \delta \le 1$.
		Let $0 \le r \le k-2$, and in case that $r \ge 1$ let further $\pi_1, \ldots, \pi_{r}$ be such that $\pi_i \from [m] \to V_i$ is a fixed permutation on the vertices in $V_i$, for each $i \in [r]$.
		Then there are at least
		\begin{align*}
		(1-o(1))^m (m!)^{k-r} \delta^m
		\end{align*}
		many tuples of permutations $(\pi_{r +1}, \ldots, \pi_k)$ for which $\pi' = (\pi_1, \ldots, \pi_k)$ induces a perfect matching in $H$.
	\end{corollary}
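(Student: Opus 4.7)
My plan is to combine \Cref{lem:kDirac} with \Cref{thm:CKmtch} via a simple reduction. First I observe that it suffices to handle the extremal case $r = k-2$: if $r < k-2$, then for any one of the $(m!)^{k-2-r}$ ways to further fix arbitrary permutations $\pi_{r+1}, \ldots, \pi_{k-2}$, the hypothesis $\delta^*_{k-1}(H) \ge \delta m$ is unaffected, so the $r=k-2$ count applies uniformly; distinct choices of the additional fixed block produce distinct output tuples $(\pi_{r+1}, \ldots, \pi_k)$, so multiplying the $r=k-2$ count by $(m!)^{k-2-r}$ upgrades the factor $(m!)^{2}$ to the desired $(m!)^{k-r}$.

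For the base case $r = k-2$, fix $\pi_1, \ldots, \pi_{k-2}$ arbitrarily and choose a small $\eps > 0$ with $\delta - \eps > 1/2$. Drawing $\pi_{k-1}$ uniformly at random from $S_m$, \Cref{lem:kDirac} ensures that with probability $1 - o(1)$ the auxiliary bipartite graph $\cB_\pi(H)$ has minimum degree at least $(\delta - \eps) m$; consequently at least $(1 - o(1)) m!$ choices of $\pi_{k-1}$ produce such a graph. For each such $\pi_{k-1}$, \Cref{thm:CKmtch} applied to $\cB_\pi(H)$ (both parts of size $m$ and minimum degree at least $(\delta - \eps) m > m/2$) yields at least $(1-o(1))^m m! (\delta - \eps)^m$ perfect matchings.

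Each perfect matching of $\cB_\pi(H)$ matches the vertex $\set{\pi_1(j), \ldots, \pi_{k-1}(j)} \in \cM_\pi$ to a unique $v_j \in V_k$, and the assignment $\pi_k(j) \defined v_j$ defines a bijection between these matchings and the permutations $\pi_k \in S_m$ for which $(\pi_1, \ldots, \pi_k)$ induces a perfect matching in $H$. Multiplying the two contributions gives at least $(1-o(1))^m (m!)^2 (\delta - \eps)^m$ valid pairs $(\pi_{k-1}, \pi_k)$. Since $\eps > 0$ is arbitrary and the $(1-o(1))^m$ notation means ``for every $\eps' > 0$, $(1-\eps')^m$ holds for $m$ large,'' one has $(\delta - \eps)^m \ge (1-o(1))^m \delta^m$, and combining with the $(m!)^{k-2-r}$ factor from the reduction yields the stated bound. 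I expect no serious obstacle: the main care-points are that $\delta^*_{k-1}(H)$ is a property of $H$ alone (so fixing extra permutations preserves it), and that the small error $\eps$ and the high-probability factor absorb cleanly into the outer $(1-o(1))^m$.
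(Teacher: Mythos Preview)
Your proposal is correct and follows essentially the same approach as the paper's proof: fix $\pi_{r+1},\ldots,\pi_{k-2}$ arbitrarily (accounting for the factor $(m!)^{k-2-r}$), apply \Cref{lem:kDirac} to get a $(1-o(1))$-fraction of good $\pi_{k-1}$, and then apply \Cref{thm:CKmtch} to count perfect matchings in $\cB_\pi(H)$, identifying these with the choices of $\pi_k$. If anything, your write-up is slightly more careful than the paper's in making explicit the bijection between perfect matchings of $\cB_\pi(H)$ and valid $\pi_k$, and in handling the absorption of $(\delta-\eps)^m$ into $(1-o(1))^m\delta^m$.
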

	
	\begin{proof}
		Let $0 < \varepsilon \le \delta - 1/2$ and let $m$ be sufficiently large.
		Fix a set of $k-2-r$ permutations, $\pi_{r+1}, \ldots, \pi_{k-2}$ in case $r \le k-3$, and an empty set for $r=k-2$.
		There are $(m!)^{k-2-r}$ possibilities for choosing this set of permutations.
		By \Cref{lem:kDirac} we know that there are at least $(1-o(1))^m m!$ permutations $\pi_{k-1} \from [m] \to V_{k-1}$ for which, if $\pi = (\pi_1, \ldots, \pi_{k-1})$, then the bipartite graph $\cB_{\pi}(H)$ has minimum degree at least $(\delta - \varepsilon)m \ge m/2$.
		Consider one such $\pi_{k-1}$.
		By \Cref{thm:CKmtch} we get that $\cB_{\pi}(H)$ contains at least $(1-o(1))^m m! \delta^m$ perfect matchings, each can be encoded by a certain permutation $\pi_k \from [m] \to V_k$.
		Moreover, each such perfect matching gives a perfect matching in $H$.
		In total we get that there are at least $(1-o(1))^m (m!)^{k-r} \delta^m$ many tuples $\pi' = (\pi_1, \ldots, \pi_k)$ which induce a perfect matching (not necessarily uniquely) in $H$.
	\end{proof}
	
	\subsection{Constructing many $(\ell, m, V')$-path-systems}
	\label{sec:ellpaths}
	This section is the heart of the argument.
	We show that a subset containing most of the vertices in $H$ can by covered by the ``correct'' number of path-systems, that is, the number of Hamiltonian $\ell$-cycles we aim to find in $H$.
	
	We start with the precise notion of a path-system.
	\begin{definition}
		\label{def:ellpathsstm}
		Let $F$ be a $k$-graph and let $X \subset F$ be a subset of vertices.
		We say that an ordered collection $\cP = (P_1, \ldots, P_m)$ is an \emph{$(\ell, m, X)$-path-system}, if
		\begin{itemize}
			\item $P_i$ is an $\ell$-path in $F$ on $\card{X}/m$ vertices, for each $i \in [m]$,
			\item $\set{P_i}_{i \in [m]}$ are pairwise vertex-disjoint, and their union covers all the vertices in $X$.
		\end{itemize}
	\end{definition}
	Note that two different $(\ell, m, X)$-path-systems can consist of the same family of $\ell$-paths but with different orderings.
	We distinguish between two such families, since they will eventually form two different Hamiltonian $\ell$-cycles when we tailor the paths to one another.
	
	For a partition $\Pi$ of the vertices of a $k$-graph $H$ into $s$ parts, we denote by $H[\Pi]$ the $s$-partite $k$-graph spanned by edges going between parts of $\Pi$.
	We use the following two lemmata to show that a subset of most of the vertices in a $\delta$-Dirac $k$-graph can be covered by many path-systems.
	\begin{lemma}
		\label{lem:ManyPrtns}
		Let $H = (V,E)$ be a $k$-graph on $n$ vertices with minimum co-degree $\delta_{k-1}(H) \ge \delta n$, for some $0 < \delta \le 1$, and let $V' \subset V$ be a fixed subset of vertices of size $n' = n-o(n)$.
		Let $\Pi = (V_1, \ldots, V_{n'/m})$ be an equipartition of $V'$ chosen uniformly at random, into parts of size $m \defined m(n) = \omega(\log n)$.
		Then with high probability $H[\Pi]$ satisfies
		\begin{align}
		\label{eq:GoodPrtn}
		\delta^*_{k-1} (H[\Pi]) \ge (\delta - o(1))m.
		\end{align}
	\end{lemma}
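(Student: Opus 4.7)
The plan is to union-bound over all potentially bad configurations $(X,i)$, controlling each one via a hypergeometric Chernoff estimate. For each pair with $X \in \binom{V'}{k-1}$ and $i \in [n'/m]$, I will show that with probability $1 - o(n^{-k})$ the random equipartition $\Pi$ avoids the \emph{bad} event: $X$ lies in $k-1$ distinct parts, none equal to $V_i$, and yet $|N_H(X) \cap V_i| < (\delta - \varepsilon)m$, where $N_H(X) \defined \{v \in V : X \cup \{v\} \in E(H)\}$ and $\varepsilon = \varepsilon(n) = o(1)$ is to be chosen. The point of this reduction is that whenever $X$ actually does lie in $k-1$ distinct parts none equal to $V_i$, the quantity $d_{H[\Pi]}(X, V_i)$ coincides with $|N_H(X) \cap V_i|$, so ruling out all such bad events is exactly what is required for the desired bound on $\delta^*_{k-1}(H[\Pi])$.

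The main observation driving the tail bound is that the bad event forces $X \cap V_i = \emptyset$, and conditionally on this, $V_i$ is a uniformly random $m$-subset of $V' \setminus X$. Since $|V \setminus V'| = o(n)$, the minimum co-degree hypothesis gives $|N_H(X) \cap (V' \setminus X)| \geq (\delta - o(1))n'$, so $|N_H(X) \cap V_i|$ is hypergeometric with mean at least $(\delta - o(1))m$. Applying \Cref{Chernoff} (valid for the hypergeometric distribution by the remark following it) then yields a tail of shape $\exp(-\Omega(\varepsilon^2 m))$.

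With at most $\binom{n'}{k-1} \cdot (n'/m) \le n^k$ pairs $(X, i)$ to union-bound over, the argument closes provided one can take $\varepsilon = o(1)$ with $\varepsilon^2 m / \log n \to \infty$. This is exactly where the assumption $m = \omega(\log n)$ enters: a concrete choice such as $\varepsilon = (\log n / m)^{1/4}$ simultaneously satisfies $\varepsilon \to 0$ and $\varepsilon^2 m = \sqrt{m \log n} = \omega(\log n)$, so each bad event has probability $o(n^{-k})$ as needed. The only delicate point is making the hypergeometric step rigorous, namely that conditionally on $X \cap V_i = \emptyset$ (or more generally conditionally on the placement of any fixed finite set of vertices under $\Pi$), the remaining vertices of $V'$ are distributed as a uniformly random equipartition into the remaining slots --- a standard symmetry statement about uniform random equipartitions. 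Beyond this bookkeeping I do not anticipate any serious obstacle; the heart of the proof is the single Chernoff-and-union-bound step, and the quantitative slack provided by $m = \omega(\log n)$ is more than enough.
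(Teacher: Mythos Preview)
Your proposal is correct and follows essentially the same approach as the paper: fix a pair $(X,i)$, use the hypergeometric Chernoff bound to show $|N_H(X)\cap V_i| \ge (\delta - \varepsilon)m$ with probability $1-\exp(-\Omega(\varepsilon^2 m))$, and then union-bound over the at most $n^k$ pairs, using $m=\omega(\log n)$ to beat the polynomial factor. If anything, you are slightly more careful than the paper in two places: you explicitly justify the hypergeometric distribution via the conditioning on $X\cap V_i=\emptyset$ (the paper writes $\EE[d(X,V_i)]=|V_i|\,d_{H[V']}(X)/n'$ without comment), and you let $\varepsilon=\varepsilon(n)\to 0$ explicitly rather than fixing $\varepsilon$ and appealing to its arbitrariness at the end.
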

	
	\begin{proof}
		Let $0 < \varepsilon < \delta/2$ and let $n$ be sufficiently large.
		Since $V'$ contains all but $o(n)$ many vertices in $H$, and since $\delta_{k-1}(H) \ge \delta n$, we get
		\begin{align*}
		\delta_{k-1}\left(H[V'] \right) \ge \left(\delta - \varepsilon \right)n'.
		\end{align*}
		
		Let $i \in [n'/m]$ and let $X \in V_{j_1} \times \cdots \times V_{j_{k-1}}$ for some $1 \le j_1 < \ldots < j_{k-1} \le n'/m$ with $j_1, \ldots, j_{k-1} \neq i$.
		We have
		\begin{align*}
		\mu \defined \EE\left[d(X, V_i) \right] = |V_i|\frac{d_{H[V']}(X)}{n'} \ge (\delta - \varepsilon)m.
		\end{align*}
		Hence, by \Cref{Chernoff} we get that
		\begin{align*}
		\Pr\left[d(X, V_i) < (\delta - 2\varepsilon)m \right] &\le \Pr\left[d(X, V_i) < (1-\varepsilon)\mu \right] \le \exp \left(-\frac{1}{2}\varepsilon^2(\delta - \varepsilon) \omega(\log n) \right).
		\end{align*}
		Taking a union bound over all $i\in [n'/m]$ and $X \in \binom{V'}{k-1}$ we get
		\begin{align*}
		\Pr \left[\exists X,\text{ and }i: d(X, V_i) < (\delta - 2\varepsilon)m \right] \le n^{k- \omega(1)} = o(1),
		\end{align*}
		and the statement follows.
	\end{proof}
	
	In other words, \Cref{lem:ManyPrtns} shows that almost all partitions of $H[V']$ into $n'/m$ parts inherit the relative minimum co-degree from $H$ as induced $(n'/m)$-partite $k$-graphs.
	In particular, if $H$ is $\delta$-Dirac, then most of these partition inherit this property.
	Furthermore, we show that those induced $k$-graphs given by ``good'' partitions, can be covered by many path-systems.
	
	\begin{lemma}
		\label{lem:ManyPths}
		Let $H = (V,E)$ be a $k$-graph on $n$ vertices with minimum co-degree $\delta_{k-1}(H) \ge \delta n$, for some $1/2 < \delta \le 1$, and let $V' \subset V$ be a fixed subset of vertices of size $n' = n-o(n)$.
		Let $\Pi = (V_1, \ldots, V_{n'/m})$ be an equipartition of the vertices of $V'$ into parts of size $m \defined m(n) = \omega(\log n)$, satisfying (\ref{eq:GoodPrtn}).
		Then the $(n'/m)$-partite $k$-graph $H[\Pi]$ can be covered by at least
		\begin{align*}
		(1-o(1))^{n'}\cdot \left(m! \right)^{\frac{n'}{m}} \cdot \delta^{\frac{n'}{k-\ell}}
		\end{align*}
		many distinct $(\ell, m, V')$-path-systems.
	\end{lemma}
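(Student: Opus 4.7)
The plan is to build each $(\ell, m, V')$-path-system block by block using \Cref{cor:ManyPM}, following the scheme of the proof outline. Let $s = n'/m$ and put $T = (s - \ell)/(k-\ell)$ (the divisibility $s \equiv \ell \pmod{k-\ell}$ is guaranteed by the outer setup). For each $t \in [T]$, let $H_t$ denote the $k$-partite $k$-graph induced by $V_{(t-1)(k-\ell)+1}, \ldots, V_{(t-1)(k-\ell)+k}$, and choose an ordered perfect matching $M_t = (e_1^{(t)}, \ldots, e_m^{(t)})$ in $H_t$. For $t \ge 2$, insist that for every $i \in [m]$, the $\ell$ vertices of $e_i^{(t)}$ lying in the first $\ell$ parts of $H_t$ coincide with the $\ell$ vertices of $e_i^{(t-1)}$ lying in the last $\ell$ parts of $H_{t-1}$. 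The union $\bigcup_t M_t$, ordered by the index $i$ inherited from $M_1$, is then an $(\ell, m, V')$-path-system, and two distinct sequences $(M_1, \ldots, M_T)$ yield distinct path-systems, since the first step $t$ at which they disagree already changes the edge set of some path $P_i$.

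Condition (\ref{eq:GoodPrtn}) gives $\delta^*_{k-1}(H_t) \ge (\delta - o(1))m > m/2$ for every $t$, so \Cref{cor:ManyPM} applies at each step. For $t = 1$, taking $r = 0$ produces at least $(1-o(1))^m (m!)^k (\delta - o(1))^m$ ordered matchings. For each $t \ge 2$, the previous matching $M_{t-1}$ fixes the $\ell$ permutations on the first $\ell$ parts of $H_t$ (namely $\pi_j(i)$ is the vertex of $e_i^{(t-1)}$ in $V_{(t-1)(k-\ell)+j}$), so the corollary applies with $r = \ell$ -- which is legal precisely because $\ell \le k-2$ -- yielding at least $(1-o(1))^m (m!)^{k-\ell}(\delta - o(1))^m$ extensions. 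Multiplying over the $T$ steps, the total count is at least
\[
(1-o(1))^{mT} \cdot (m!)^{k + (T-1)(k-\ell)} \cdot (\delta - o(1))^{mT} \;=\; (1-o(1))^{mT}\,(m!)^{n'/m}\,(\delta - o(1))^{mT},
\]
using the identity $k + (T-1)(k-\ell) = s = n'/m$. Since $mT = (n' - m\ell)/(k-\ell) \le n'$ and $\delta \le 1$, both $(1-o(1))^{mT} \ge (1-o(1))^{n'}$ and $\delta^{mT} \ge \delta^{n'/(k-\ell)}$ hold, so the product is at least the stated bound.

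I expect the only nontrivial issues to be organizational: verifying that the ``agree on the shared $\ell$ parts'' condition between $M_{t-1}$ and $M_t$ corresponds precisely to the ``fix $\pi_1, \ldots, \pi_\ell$'' hypothesis of \Cref{cor:ManyPM} (which it does, because the ordering by the common index $i$ is carried through from $M_1$), and tracking the arithmetic that turns $mT$-exponents into $n'$- and $n'/(k-\ell)$-exponents. No genuinely new ingredients are required beyond \Cref{cor:ManyPM} and the good-partition hypothesis.
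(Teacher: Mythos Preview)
Your proof is correct and follows essentially the same approach as the paper's: iteratively apply \Cref{cor:ManyPM}, once with $r=0$ and then $T-1$ times with $r=\ell$, and multiply the counts. Your bookkeeping is in fact slightly tidier than the paper's --- you keep the $(\delta - o(1))$ coming from (\ref{eq:GoodPrtn}) explicit and handle the $mT$ versus $n'/(k-\ell)$ discrepancy via the inequality $\delta^{mT}\ge \delta^{n'/(k-\ell)}$, whereas the paper simply raises the per-step bound to the power $\tfrac{n'/m}{k-\ell}$ --- but the argument is the same.
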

	
	\begin{proof}
		For the first step we consider the $k$-partite subhypergraph of $H$ induced by the first $k$ parts of $\Pi$, that is $H_1 \defined H[V_1, \ldots, V_k]$.
		By \Cref{cor:ManyPM} with $r=0$ we get that there are at least $(1-o(1))^m \left(m! \right)^k \delta^m$ many $\pi^1 = (\pi^1_1, \ldots, \pi^1_k)$ which give a perfect matching in $H_1$.
		Fix one such $\pi^1 = (\pi^1_1, \ldots, \pi^1_k)$, and let $M_1$ be the ordered perfect matching induced by $\pi^1$, ordered according to $\pi^1_1$.
		So we have $M_1 = (e^1_1, \ldots, e^1_m)$,
		where $e^1_j = \set{\pi^1_1(j), \ldots, \pi^1_k(j)}$.
		Moreover, for each $j \in [m]$, let $X^1_j$ be the intersection of $e^1_j$ with the last $\ell$ parts, that is,
		\begin{align*}
		X^1_j \defined e^1_j \cap \left(V_{k-\ell+1}, \ldots, V_k \right) = (\pi^1_{k-\ell+1}(j), \ldots, \pi^1_k(j)).
		\end{align*}
		
		For the second step, consider the $k$ consecutive parts in $\Pi$ beginning with the last $\ell$ parts of $H_1$.
		More precisely, we look at the $k$-partite induced $k$-graph $H_2 \defined H[V_{k-\ell+1}, \ldots, V_{2k-\ell}]$, and we find there a perfect matching $M_2 = \set(e^2_1, \ldots, e^2_m)$ (ordered according to $\pi^2_1$) such that $e^2_j \cap e^1_j = X^1_j$ for every $j \in [m]$.
		We do this as follows.
		We let $(\pi^2_1, \ldots, \pi^2_{\ell}) = (\pi^1_{k-\ell+1}, \pi^1_k)$ be our fixed $\ell$ permutations on $V_{k-\ell+1}, \ldots, V_k$, respectively.
		Then, by \Cref{cor:ManyPM} with $r=\ell$, we get that there are at least
		\begin{align}
		\label{eq:MnyExtns}
		(1-o(1))^m (m!)^{k-\ell} \delta^m 
		\end{align}
		many tuples $(\pi^2_{\ell+1}, \ldots, \pi^2_k)$ for which $\pi^2 = (\pi^2_1, \ldots, \pi^2_k)$ induces a perfect matching in $H_2$.
		Note further that two distinct such tuples induce two distinct perfect matchings in $H_2$, since $\pi^2_1$ is fixed.
		Hence, we get that the number of perfect matching in $H_2$ which agree with $M_1$ on the first $\ell$ parts is at least as given in (\ref{eq:MnyExtns}).
		Let $M_2 = \set{e^2_1, \ldots, e^2_m}$ be such perfect matching, and for every $j \in [m]$, let $X^2_j$ to be the intersection of $e^2_j$ with the last $\ell$ parts of $H_2$, that is
		\begin{align*}
		X^2_j \defined e^2_j \cap (V_{2(k-\ell)+1}, \ldots, V_{2k-\ell}) = (\pi^2_{k-\ell+1}(j), \ldots, \pi^2_{k}(j)).
		\end{align*}
		Note that indeed we have $e^2_j \cap e^1_j = X^1_j$ for every $j \in [m]$.
		
		We then repeat the above, where in each step we consider the next $k$ parts, overlapping the last $\ell$ parts from the preceding step.
		We do this $\frac{n'/m - \ell}{k - \ell}$ many times, until we cover all parts in $\Pi$.
		
		More formally, for $2 \le s \le \frac{n'/m - \ell}{k-\ell}$ we do the following in the $s$-th step.
		Consider the $k$-partite induced $k$-graph $H_s \defined H[V_{(s-1)(k-\ell)+1}, \ldots, V_{sk - (s-1)\ell}]$, and $\ell$ fixed permutations $(\pi^s_1, \ldots, \pi^s_{\ell}) = (\pi^{s-1}_{k-\ell}, \ldots, \pi^{s-1}_k)$ on $V_{(s-1)(k-\ell)+1}, \ldots, V_{(s-1)k - s\ell}$ given by step $s-1$, respectively.
		By \Cref{cor:ManyPM} with $r=\ell$ we get that the number of tuples $(\pi^s_{\ell+1}, \ldots, \pi^s_k)$ for which $\pi^s = (\pi^s_1, \ldots, \pi^s_k)$ induces a perfect matching in $H_s$ is at least as given in (\ref{eq:MnyExtns}).
		Again, any two distinct such tuples induce two distinct perfect matchings in $H_s$, as we have fixed $\pi^s_1$. We get that the number of perfect matchings $M_s$ in $H_s$ which agree with $M_{s-1}$ on the first $\ell$ parts is as at least as in (\ref{eq:MnyExtns}).
		Let $M_s = (e^s_1, \ldots, e^s_m)$ be one such perfect matching (ordered according to $\pi^s_1$).
		For every $j \in [m]$ let
		\begin{align*}
		X^s_j \defined e^s_j \cap (V_{s(k-\ell)+1}, \ldots, V_{sk - (s-1)\ell}) = (\pi^s_{k-\ell+1}, \ldots, \pi^s_k),
		\end{align*}
		and note that we have $e^s_j \cap e^{s-1}_j = X^{s-1}_j$.
		
		The statement then follows by multiplying by the number of options to extend the paths in each step.
		That is, raising (\ref{eq:MnyExtns}) to the power of $\frac{n'/m}{k-\ell}$, we get that there are at least
		\begin{align*}
		(1-o(1))^{n'} \cdot (m!)^{\frac{n'}{m}} \cdot  \delta^{\frac{n'}{k-\ell}}
		\end{align*}
		many $(\ell, m, V')$-path-systems that cover $H[\Pi]$.
	\end{proof}
	
	Combining the counting in \Cref{lem:ManyPrtns} and in \Cref{lem:ManyPths}, we get the following corollary, giving us with the required number of $(\ell, m, V')$-path-systems covering $H[V']$.
	\begin{corollary}
		\label{cor:ManyStms}
		Let $H = (V,E)$ be a $\delta$-Dirac $k$-graph on $n$ vertices.
		Let $V' \subset V$ be a subset of size $n' = n - o\left(\frac{n}{\log n} \right)$.
		Then $H[V']$ can be covered by at least
		\begin{equation}
		\label{eq:ManyStms}
		(1-o(1))^n \cdot \Psi_k(n, \ell) \cdot \delta^{\frac{n}{k-\ell}}
		\end{equation}
		many $(\ell, m, V')$-path-systems.
	\end{corollary}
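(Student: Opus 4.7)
My plan is to combine \Cref{lem:ManyPrtns} and \Cref{lem:ManyPths} via a double-counting argument on pairs $(\Pi, \cP)$, where $\Pi$ is an ordered equipartition of $V'$ and $\cP$ is an $(\ell, m, V')$-path-system covering $H[\Pi]$. The first step is to choose $m = m(n)$ in a regime where both lemmata apply, namely $m = \omega(\log n)$ (as required by both), and simultaneously $m = o(n)$, so that a multiplicity factor of $2^m$ arising below can be absorbed into $(1-o(1))^n$. For concreteness one can take $m = \lfloor \log^2 n \rfloor$; a slight adjustment of $V'$ (removing at most $m-1$ vertices) ensures $m \mid n'$, and since this modification is still of order $o(n/\log n)$ it does not affect the asymptotics.

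The counting then proceeds as follows. The total number of ordered equipartitions of $V'$ into $n'/m$ parts of size $m$ is $n'!/(m!)^{n'/m}$. Since property (\ref{eq:GoodPrtn}) is invariant under relabeling of parts, \Cref{lem:ManyPrtns} shows that at least a $1-o(1)$ fraction of these ordered equipartitions are good. For every such good $\Pi$, the hypothesis $\delta > 1/2$ guarantees that $\delta^*_{k-1}(H[\Pi]) \geq (\delta - o(1))m > m/2$, so \Cref{lem:ManyPths} applies and yields at least $(1-o(1))^{n'}(m!)^{n'/m}\delta^{n'/(k-\ell)}$ distinct $(\ell,m,V')$-path-systems covering $H[\Pi]$. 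Multiplying these two lower bounds, the total number of pairs $(\Pi, \cP)$ is at least $(1-o(1))^{n'} \cdot n'! \cdot \delta^{n'/(k-\ell)}$.

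Finally I would convert this to a count of distinct path-systems. A given path-system $\cP = (P_1, \ldots, P_m)$ is compatible with at most $2^m$ ordered partitions, since choosing a traversal orientation for each of the $m$ paths determines the compatible $\Pi$ uniquely via $V_i = \{\text{$i$-th vertex of each oriented path}\}$. Dividing, the number of distinct path-systems is at least $(1-o(1))^{n'} \cdot n'! \cdot \delta^{n'/(k-\ell)}/2^m$. Because $n-n' = o(n/\log n)$, short calculations give $n!/n'! = \exp(o(n)) = (1+o(1))^n$ and $\delta^{(n-n')/(k-\ell)} = (1+o(1))^n$, while $m = o(n)$ gives $2^m = (1+o(1))^n$. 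Absorbing these three subexponential factors into a single $(1-o(1))^n$ prefactor and using the trivial bound $\Psi_k(n,\ell) \leq n!$ yields (\ref{eq:ManyStms}). The main difficulty is precisely the bookkeeping in this last step: $m$ must be chosen large enough to make the Chernoff union bound in \Cref{lem:ManyPrtns} effective and small enough that the orientation multiplicity $2^m$ stays subexponential in $n$, and one must verify that the polynomial loss from $n' < n$ and the $2^m$ factor can both be swept into a single $(1-o(1))^n$ prefactor without harming the target.
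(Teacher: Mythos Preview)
Your double-counting framework is the same as the paper's, but the multiplicity bound in your final step is wrong, and this is a genuine gap rather than a cosmetic issue. You claim that a path-system $\cP=(P_1,\ldots,P_m)$ is compatible with at most $2^m$ ordered equipartitions because ``choosing a traversal orientation for each path determines $\Pi$ uniquely.'' This presupposes that an $\ell$-path, viewed as a hypergraph, has a well-defined $i$-th vertex once an orientation is chosen. For $\ell<k-1$ that is false: within each block of $k-\ell$ consecutive positions there is additional freedom to permute vertices without changing the edge set. Concretely, for $k=4$, $\ell=2$ a path with edges $\{1,2,3,4\},\{3,4,5,6\},\{5,6,7,8\},\ldots$ admits the swaps $1\leftrightarrow 2$, $3\leftrightarrow 4$, $5\leftrightarrow 6$, \ldots, each of which yields a different ordered equipartition producing the same path-system. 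Since each path has $\Theta(n/m)$ such swaps, the true multiplicity is of order $2^{\Theta(n)}$, not $2^m$.

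In general the correct multiplicity is, up to subexponential factors, $c_k(\ell)^{n'/(k-\ell)}$ with $c_k(\ell)=r!(k-\ell-r)!$ and $r=k\bmod(k-\ell)$; this is exactly the factor by which $\Psi_k(n,\ell)$ falls short of $n!$. Your bound therefore overshoots: you would conclude roughly $n!\,\delta^{n/(k-\ell)}$ distinct path-systems, but whenever $c_k(\ell)>1$ (for instance any $\ell$ with $k-\ell\ge 3$) there simply cannot be that many, since every path-system corresponds to at least $c_k(\ell)^{n'/(k-\ell)}$ vertex orderings of $V'$. The paper's proof handles this by identifying the overcount as $c_k(\ell)^{n'/(k-\ell)}$ and then proving the matching claim $\Psi_k(n,\ell)=(1-o(1))^n\,n!\,c_k(\ell)^{-n/(k-\ell)}$, so that the exponential factors cancel. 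Replacing your $2^m$ by this exponential multiplicity, and your inequality $\Psi_k(n,\ell)\le n!$ by the sharper asymptotic for $\Psi_k(n,\ell)$, would repair the argument.
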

	
	\begin{proof}
		By \Cref{lem:ManyPrtns} there are at least
		\begin{align*}
		(1-o(1)) \cdot  \frac{n'!}{\left(m! \right)^{n'/m}}
		\end{align*}
		many partitions $\Pi$ which satisfy (\ref{eq:GoodPrtn}).
		By \Cref{lem:ManyPths}, each such partition can be covered by at least $(m!)^{\frac{n'}{m}} \delta^{\frac{n'}{k-\ell}} (1-o(1))^{n'}$ many distinct $(\ell, m, V')$-path-systems.
		However, for some values of $\ell$, the same path-system can be obtained from many different orderings.
		We denote by $c_k(\ell)$ the number of ways to reorder the first $k-\ell$ vertices in each edge such that the path-system is not changed, so we get $(c_k(\ell))^{\frac{n'}{k-\ell}}$ many such reorderings.
		Considering this double counting, and recalling that $n' = n - o\left(\frac{n}{\log n} \right)$, we get that in total $H[V']$ can be covered by at least
		\begin{align*}
		(1-o(1))^{n'} \cdot n'! \cdot \left(\frac{\delta}{c_k(\ell)} \right)^{\frac{n'}{k-\ell}} = (1-o(1))^n \cdot n! \cdot \left(\frac{\delta}{c_k(\ell)} \right)^{\frac{n}{k-\ell}}
		\end{align*}
		many $(\ell, m, V')$-path-systems.
		In order that this is precisely the expression in (\ref{eq:ManyStms}), it is sufficient to prove the following claim.
		\begin{claim}
			$\Psi_k(n, \ell) = (1-o(1))^n \cdot n! \cdot c_k(\ell)^{-\frac{n}{k-\ell}}$.
		\end{claim}
		
		\begin{proof}
			We know that the complete $k$-graph, any cyclical ordering of the vertices yields a Hamiltonian $\ell$-cycle.
			However, the same Hamiltonian $\ell$-cycle can be obtained by two different cyclical orderings of the vertices, for some values of $\ell$.
			Moreover, the quantity $c_k(\ell)$ gives us the correct counting here as well, meaning that a Hamiltonian $\ell$-cycle can be obtained from $c_k(\ell)^{\frac{n}{k-\ell}}$ many different cyclical orderings of the vertices.
			In addition, there are $\frac{2n}{k-\ell}$ different ways to cyclically reorder all $\frac{n}{k-\ell}$ edges and reverse their direction.
			So we have
			\begin{align*}
			\Psi_k(n, \ell) = (n-1)! \cdot \frac{k-\ell}{2} \cdot \left(\frac{1}{c_k(\ell)} \right)^{\frac{n}{k-\ell}}. 
			\end{align*}
			
			To complete the proof, we also describe the quantity $c_k(\ell)$ explicitly.
			Let $0\le r < k-\ell$ be the residue from dividing $k$ by $k-\ell$.
			That is, $r$ satisfies $k = (k-\ell)\left\lfloor\frac{k}{k-\ell} \right\rfloor + r$.
			Then we have
			\begin{equation}
			\label{eq:cknell}
			c_k(\ell) = r!(k-\ell-r)!.
			\end{equation}
			For example, for $\ell < k/2$ we have $r = \ell$, which gives $c_k(\ell) = \ell!(k-2\ell)!$.
			
			Note that, in particular, we have $\Psi_k(n, \ell) = \left(\Theta(n) \right)^n$.
		\end{proof}
		
		All in all, we get that there are at least
		\begin{align*}
		(1-o(1))^n \cdot \Psi_k(n, \ell) \cdot \delta^{\frac{n}{k-\ell}}
		\end{align*}
		many $(\ell, m, V')$-path-systems covering $H[\Pi]$, as required.
	\end{proof}

	\subsection{Turning $(\ell, m , V')$-path-systems into Hamiltonian $\ell$-cycles}
	\label{sec:PthToCcle}
	In this section we show how to form a Hamiltonian $\ell$-cycle from an $(\ell, m, V')$-path-system.
	We do this in two steps.
	We first show that we can put aside a subset of vertices with certain properties which later allow us to use this subset to connect $\ell$-paths to one another.
	We then show how to use this set of vertices to form a Hamiltonian $\ell$-cycle from $\ell$-paths which cover the remaining vertices.
	
	\subsubsection{Finding a $(\delta - o(1), m, t)$-connecting-set in a Dirac hypergraph}
	We start by defining the properties of a set to put aside which will be used to turn a collection of $\ell$-paths into a Hamiltonian $\ell$-cycle.
	In fact, we put aside a family of subsets, so it will later be easier to describe the process of connecting $\ell$-paths to one another precisely.
	\begin{definition}
		\label{def:ConnSstm}
		Let $F$ be a $k$-graph, $\eta \in (0,1)$, and $m, t \ge 1$ integers.
		We say that an ordered collection $\cW = \left(W_1, \ldots, W_m \right)$ is an \emph{$(\eta, m, t)$-connecting-system} in $F$, if
		\begin{enumerate}
			\item $\set{W_i}_{i \in [m]}$ are pairwise disjoint subsets of vertices in $F$,
			\item $\card{W_i} = t$ for all $i \in [m]$, and 
			\item $\forall i \in [m]$ and $\forall X \in \binom{\, V(F)}{k-1}$ we have
			\begin{align*}
			d_{F}(X, W_i) \ge \eta \card{W_i}.
			\end{align*}
		\end{enumerate}
		We say that a subset $W \subset V(F)$ is an \emph{$(\eta, m, t)$-connecting-set} if it admits an equipartition $W = W_1 \cup \cdots \cup W_m$ such that $\left(W_1, \ldots, W_m \right)$ is an $(\eta, m, t)$-connecting-system.
	\end{definition}
	
	We now show that we can find such a set in $H$ where $\eta$ is asymptotically the minimum co-degree of $H$.
	\begin{lemma}
		\label{lem:ConnSstm}
		Let $H = (V,E)$ be a $k$-graph on $n$ vertices with minimum co-degree $\delta n$ for some $0 < \delta \le 1$.
		Then there exists a $(\delta - o(1), m, t)$-connecting-system $\cW = \left(W_1, \ldots, W_m \right)$ in $H$, for any $t = \omega(\log n)$ and $1 \le m \le n/t$.
	\end{lemma}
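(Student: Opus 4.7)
The plan is to construct the connecting-system by a direct probabilistic argument: sample the blocks $W_i$ by taking a uniformly random permutation $\pi$ of $V$ and letting $W_i \defined \{\pi((i-1)t+1), \ldots, \pi(it)\}$ for each $i \in [m]$. By construction the $W_i$ are pairwise disjoint subsets of $V$ each of size $t$, so properties (i) and (ii) of \Cref{def:ConnSstm} are automatic. All the work goes into verifying property (iii), namely that $d_H(X, W_i) \ge (\delta - o(1))t$ holds simultaneously for every $X \in \binom{V}{k-1}$ and every $i \in [m]$.

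Fix such $X$ and $i$, and let $N(X) \defined \{v \in V\setminus X : X \cup \{v\} \in E(H)\}$. The minimum co-degree hypothesis gives $|N(X)| \ge \delta n$. Since $W_i$ is (marginally) a uniformly random $t$-subset of $V$, the quantity $d_H(X, W_i) = |W_i \cap N(X)|$ is a hypergeometric random variable with sample size $t$, population $n$, and at least $\delta n$ successes, so its mean is at least $\delta t$. Choose $\varepsilon = \varepsilon(n) \to 0$ slowly enough that $\varepsilon^2 t \gg k\log n$; this is possible since the hypothesis $t = \omega(\log n)$ leaves a wide margin. Applying the Chernoff bound in its hypergeometric form (\Cref{Chernoff} together with the remark following it) then yields
\[
\Pr\bigl[d_H(X, W_i) < (\delta - \varepsilon\delta)\,t\bigr] \le \exp\bigl(-\varepsilon^2\delta t/2\bigr) = n^{-\omega(1)}.
\]

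Taking a union bound over the at most $n^{k-1}$ choices of $X$ and the at most $n/t \le n$ choices of $i$, a total of at most $n^k$ bad events, shows that with probability $1-o(1)$ we have $d_H(X, W_i) \ge (\delta - \varepsilon)\,t = (\delta - o(1))|W_i|$ for every pair $(X,i)$ simultaneously. In particular at least one permutation $\pi$ realises such a partition, giving the desired $(\delta - o(1), m, t)$-connecting-system $\cW = (W_1, \ldots, W_m)$. I do not expect any real obstacle in this argument: the combinatorial content is very light, and the hypothesis $t = \omega(\log n)$ is tailored precisely to the strength of concentration needed to survive the polynomial-size union bound. The only minor point to note is that the definition of $d_H(X, W_i)$ involves vertices $v \in W_i \setminus X$ (so edges $X\cup\{v\}$ make sense), but since $|X \cap W_i| \le k-1$ is a constant this contributes only an $O(1)$ error absorbed by the $o(1)$ slack in $\delta - o(1)$.
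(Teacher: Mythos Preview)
Your proposal is correct and follows essentially the same approach as the paper: sample a uniformly random ordered equipartition of $V$ into blocks of size $t$, observe that each $d_H(X,W_i)$ is hypergeometric with mean at least $\delta t$, apply the hypergeometric Chernoff bound, and take a union bound over all $(k-1)$-sets $X$ and indices $i$. Your treatment is in fact slightly more careful than the paper's in two respects: you make explicit how to let $\varepsilon\to 0$ while still beating the polynomial union bound, and you note the harmless $O(1)$ discrepancy arising from vertices of $X$ that may land in $W_i$.
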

	
	\begin{proof}
		Let $t = \omega(\log n)$ be an integer, and let $V = W_1 \cup \cdots \cup W_{n/t}$ be an ordered equipartition of $V$ into subsets of size $t$, chosen uniformly at random.
		In fact, we show that with high probability $\left(W_1, \ldots, W_{n/t} \right)$ is a $(\delta - o(1), n/t, t)$-connecting-system for $H$.
		
		The proof is very similar to that of \Cref{lem:ManyPrtns}.
		For every $X \in \binom{V}{k-1}$ and for every $i$, the co-degree $d(X, W_i)$ is a hypergeometrically distributed random variable with
		\begin{align*}
		\mu \defined \mathbb E \left[d(X, W_i) \right] = \frac{d_H(X)}{n} \card{W_i} \ge \delta t.
		\end{align*}
		Fix $X \in \binom{V}{k-1}$, and $i \in [n/t]$.
		By \Cref{Chernoff} with $0 < \varepsilon < \delta$ and by the above, we have
		\begin{align*}
		\Pr\left[d(X, W_i) < (\delta - \varepsilon)t \right] &\le \Pr\left[d(X, W_i) < (1-\varepsilon)\mu \right] \\
		&\le \exp\left(-\frac{\varepsilon^2 \delta}{2} \cdot \omega(\log n) \right).
		\end{align*}
		A union bound over all possible $X \in \binom{V}{k-1}$ and $i \in [n/t]$ gives
		\begin{align*}
		\Pr \left[\exists X,\text{ and }i: d(X, W_i) < (\delta - \varepsilon)\card{W_i} \right] \le n^{k-\omega(1)} = o(1).
		\end{align*}
		Hence, we may choose an ordered equipartition $V = W_1 \cup \cdots \cup W_{n/t}$ with the property that for each $i \in [n/t]$ and each $X\in \binom{V}{k-1}$ we have $d_H(X, W_i) \ge (\delta - \varepsilon)\card{W_i}$.
		We can then take $\cW \defined \left(W_1, \ldots, W_m \right)$ to be a $(\delta - \varepsilon, m, t)$-connecting-system in $H$, for any $1 \le m \le n/t$.
	\end{proof}

	\subsubsection{Forming a Hamiltonian cycle using a $(\delta-o(1), m, t)$-connecting-system}
	It is left to show how to use a $(\delta - o(1), m, t)$-connecting-system in $H$, for appropriate parameters $m, t$, to form a Hamiltonian $\ell$-cycle from an $(\ell, m, V')$-path-system that covers the rest of the vertices in $H$.
	For this we need a lemma from \cite{GGJKO} about $\ell$-Hamiltonian connectedness.
	\begin{definition}
		\label{def:HamCnctd}
		A $k$-graph $F$ is \emph{$\ell$-Hamiltonian connected} if for every two disjoint ordered subsets of $k-1$ vertices $\overrightarrow{X}, \overrightarrow{Y} \in V(F)^{k-1}$ there exists a Hamiltonian $\ell$-path in $F$ with end-edges $\overrightarrow{X}$ and $\overrightarrow{Y}$.
	\end{definition}
	
	\begin{lemma}[Lemma 3.7 in \cite{GGJKO}]
		\label{lem:HamCnctd}
		For every $\varepsilon > 0$ there exists $n_0$ such that every $k$-graph $F$ on $n \ge n_0$ vertices with minimum co-degree at least $(1/2 + \varepsilon)n$ is $(k-1)$-Hamiltonian connected.
	\end{lemma}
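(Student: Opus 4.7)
The plan is to prove this lemma by the absorption method, as pioneered by R\"odl, Ruci\'nski, and Szemer\'edi for tight Hamiltonicity under minimum co-degree $(1/2 + \varepsilon)n$. Fix disjoint $\overrightarrow{X}, \overrightarrow{Y} \in V(F)^{k-1}$, and write $\delta = 1/2 + \varepsilon$. The idea is to build a long tight path from $\overrightarrow{X}$ to $\overrightarrow{Y}$ in three phases: (a) reserve a short \emph{absorbing path} starting at $\overrightarrow{X}$ that can swallow any small leftover set, (b) greedily grow a long tight path that covers almost everything, and (c) use the absorber to mop up the uncovered vertices, finishing at $\overrightarrow{Y}$.

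The first sub-lemma I would prove is a \emph{Connecting Lemma}: any two disjoint ordered $(k-1)$-tuples $\overrightarrow{U}, \overrightarrow{V}$ can be joined by a tight path of length at most $L_0 = L_0(\varepsilon, k)$, even avoiding any prescribed set of $o(n)$ forbidden vertices. This follows from greedy extension: given an end-edge $(v_1, \ldots, v_{k-1})$, the co-degree condition ensures that the set of valid extensions $v$ with $\{v_2, \ldots, v_{k-1}, v\} \in E(F)$ has size at least $\delta n$, and iterating this for a few steps produces, by a supersaturation/neighbourhood-intersection argument, a constant-length tight path reaching any target end-edge.

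Next I would build the \emph{Absorbing Path} $P_{\mathrm{abs}}$: a tight path on $o(n)$ vertices, with end-edges $\overrightarrow{X}$ and some $\overrightarrow{X}'$, having the property that for every $S \subset V(F) \setminus V(P_{\mathrm{abs}})$ with $\lvert S \rvert \le \varepsilon^2 n / 100$, there exists a tight path on $V(P_{\mathrm{abs}}) \cup S$ with the same two end-edges. One shows that each vertex $v$ admits polynomially many short ``absorbing gadgets'' (local tight-path fragments whose interior can be rerouted to swallow $v$) by repeated use of the co-degree condition; a random selection of gadgets, concatenated via the Connecting Lemma, yields $P_{\mathrm{abs}}$. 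In parallel, set aside a small \emph{reservoir} $R$ of size $\Theta(\varepsilon n)$ with good co-degree into $R$ for the connection steps.

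Finally, starting from $\overrightarrow{X}'$, greedily extend a tight path covering $V(F) \setminus (V(P_{\mathrm{abs}}) \cup R)$ up to a remainder $S$ of size at most $\varepsilon^2 n / 100$; this is again a repeated application of the co-degree condition, optionally supplemented by a weak regularity or fractional-matching argument to avoid getting stuck. Use the Connecting Lemma together with vertices of $R$ to link the free end of this long path to $\overrightarrow{Y}$, and then invoke the absorbing property of $P_{\mathrm{abs}}$ to swallow $S$ together with any unused vertices of $R$. The main obstacle is the absorbing construction: establishing that every vertex has sufficiently many absorbing gadgets, and that a random family of such gadgets can be concatenated into a single tight path with the uniform absorption property, is the technically delicate step; the connecting and covering arguments are comparatively routine given the $1/2 + \varepsilon$ co-degree assumption.
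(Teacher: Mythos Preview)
The paper does not prove this lemma at all: it is quoted verbatim as Lemma~3.7 from \cite{GGJKO} and used as a black box in the proof of \Cref{lem:PthsToCycl}. So there is no ``paper's own proof'' to compare against here.

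That said, your outline via the absorption method (Connecting Lemma, Absorbing Path, reservoir, greedy cover, absorption of the leftover) is exactly the framework used in \cite{GGJKO} and in the earlier R\"odl--Ruci\'nski--Szemer\'edi papers on tight Hamiltonicity, so your plan is on target for how the cited result is actually established. One small correction: your description of the Connecting Lemma conflates two different things. The greedy one-step extension you describe only guarantees many continuations of a given end-tuple; it does not by itself show you can reach a \emph{prescribed} target tuple $\overrightarrow{V}$ in bounded length. In the actual proofs the connection is obtained either via a hypergraph regularity/supersaturation argument or via a dedicated ``connecting structures'' count showing that the set of $(k-1)$-tuples reachable in $O_k(1)$ steps from $\overrightarrow{U}$ and the analogous set for $\overrightarrow{V}$ must intersect. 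The absorber construction and the covering step are as you say.
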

	
	\begin{remark}
		\label{rem:HamCnctd}
		Note that any $(k-1)$-Hamiltonian path yields an $\ell$-Hamiltonian path with the same end-edges, for any $0 \le \ell \le k-1$ satisfying $(k-1) | n$.
		This is done simply by keeping every $(k-\ell)$-th edge, starting from one of the end-edges.
		In particular this means that if a $k$-graph is $(k-1)$-Hamiltonian connected, then it is also $\ell$-Hamiltonian connected for any $0 \le \ell \le k-1$ for which $n$ is divisible by $k-\ell$.
	\end{remark}
	
	We are now ready to show that we can connect the paths using the connecting-system we put aside, in a way that guarantees that no two distinct $(\ell, m, V')$-path-systems form the same Hamiltonian $\ell$-cycle.
	\begin{lemma}
		\label{lem:PthsToCycl}
		Let $H = (V,E)$ be a $k$-graph on $n$ vertices with minimum co-degree at least $\delta n$ for some $1/2 < \delta \le 1$.
		Let $W \subset V$ be a $\left(\delta-o(1), m, t \right)$-connecting-set in $H$, where $t \defined t(n) = \omega(1)$ and $m$ is some integer.
		Denote $V' \defined V \setminus W$, and let $\cP = \left(P_1, \ldots, P_m \right)$ be an $(\ell, m, V')$-path-system covering $H[V']$.
		Then there exists a Hamiltonian $\ell$-cycle $C$ in $H$, containing $\set[\big]{P_i}_{i \in [m]}$ as segments, according to their ordering in $\cP$.
		Moreover, if $\cP_1, \cP_2$ are two distinct $(\ell, m, V')$-path-systems, then the Hamiltonian $\ell$-cycles $C_1, C_2$ obtained from them are distinct as well.
	\end{lemma}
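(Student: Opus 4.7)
The strategy is to use each $W_i$ to build a ``connector'' between the end of $P_i$ and the start of $P_{i+1}$ (indices mod $m$), and then splice all the pieces into the desired Hamiltonian $\ell$-cycle. Concretely, for each $i \in [m]$ let $\vec X_i$ denote the terminal ordered $(k-1)$-tuple of $P_i$ and $\vec Y_{i+1}$ the initial ordered $(k-1)$-tuple of $P_{i+1}$, and consider the induced $k$-graph
\[
H_i \defined H\bigl[W_i \cup \vec X_i \cup \vec Y_{i+1}\bigr]
\]
on $N_i \defined t + 2(k-1)$ vertices. The connecting-set property of $\cW$ yields, for every $Z \in \binom{V(H_i)}{k-1}$,
\[
d_{H_i}(Z) \;\geq\; d_H(Z, W_i) - (k-1) \;\geq\; (\delta - o(1))\,t \;\geq\; \left(\tfrac12 + \varepsilon\right) N_i
\]
for some constant $\varepsilon>0$, using $\delta > 1/2$, $t = \omega(1)$, and the fact that $k$ is a constant. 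Hence \Cref{lem:HamCnctd} applies to $H_i$, which is therefore $(k-1)$-Hamiltonian connected, and we may select a tight Hamiltonian path $R_i$ in $H_i$ whose initial $(k-1)$-tuple is $\vec X_i$ and whose terminal $(k-1)$-tuple is $\vec Y_{i+1}$.

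Next, cyclically concatenate $P_1, R_1, P_2, R_2, \ldots, P_m, R_m$, identifying the overlapping ordered $(k-1)$-tuples at each $P_i$--$R_i$ and $R_i$--$P_{i+1}$ junction; the resulting cyclic vertex sequence visits each vertex of $H$ exactly once. I claim it defines a Hamiltonian $\ell$-cycle $C$ in $H$: the $\ell$-cycle uses the $k$-windows at cyclic positions $\equiv 1 \mod{k-\ell}$, and (with $|P_i|$ and $t$ chosen so that the indexing lines up modulo $k-\ell$) each such window either lies wholly inside some $P_i$ and is an edge of $H$ by the $\ell$-path structure of $P_i$, or it meets the connector region and is then a $k$-window of the tight path $R_i$, hence an edge of $H$ by tightness of $R_i$. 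Consequently $C$ contains the $P_i$ as segments in the order prescribed by $\cP$.

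For the distinctness clause, fix a canonical rule for selecting each $R_i$ (for instance, the lexicographically smallest admissible tight Hamiltonian path). Then the map $\cP \mapsto C$ is deterministic once $\cW = (W_1, \ldots, W_m)$ is fixed, and it is invertible from the pair $(C, \cW)$: restricting $C$ to $V'$ recovers the unordered family $\{P_i\}$ as the maximal $\ell$-sub-paths of $C$ lying in $V'$, and one then reads off the ordering and the individual orientations by traversing $C$ from a canonical starting point (say, the lexicographically least vertex of $W_m$) in the canonical direction that reaches $W_1$ before $W_{m-1}$. Hence $\cP_1 \neq \cP_2$ forces $C_1 \neq C_2$.

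The main technical obstacle is the interface alignment at each $P_i$--$R_i$ junction: the $\ell$-cycle's $k$-windows that straddle this boundary must be edges of $H$, yet their precise locations depend on how the global $\equiv 1 \mod{k-\ell}$ indexing falls across the transition. Taking $R_i$ to be a tight (rather than merely $\ell$-) Hamiltonian path is precisely what removes this difficulty, since then \emph{every} $k$-window sitting inside $R_i$'s vertex sequence is automatically an edge, so whichever specific windows the $\ell$-indexing picks out near the boundary are all covered. The only residual constraint is the divisibility $|P_i| + t \equiv 0 \mod{k-\ell}$, which is absorbed by choosing $t$ with the appropriate residue (still within the tolerance $t = \omega(1)$) when this lemma is invoked from the main proof.
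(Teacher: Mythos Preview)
Your proof is correct and follows essentially the same approach as the paper's: form $H_i$ on $W_i$ together with the boundary tuples of $P_i$ and $P_{i+1}$, verify the co-degree condition from the connecting-set property, invoke \Cref{lem:HamCnctd}, and splice. The paper streamlines one step by appealing to \Cref{rem:HamCnctd} to obtain an $\ell$-Hamiltonian path $Q_i$ directly, whereas you take the tight path $R_i$ and argue by hand that every $k$-window the $\ell$-indexing selects near a junction lies inside $R_i$'s vertex sequence and is therefore an edge; these are the same argument, since the Remark is proved exactly by thinning a tight path. Your treatment of the divisibility constraint (pushing it to the choice of $t$ at invocation time) is also how the paper implicitly handles it. For distinctness the paper is terser---it simply observes that in $C$ the segment between the fixed blocks $W_{i-1}$ and $W_i$ is $P_i$, so $\cP$ is recoverable from $(C,\cW)$---but your canonical-choice/inversion argument reaches the same conclusion.
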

	
	\begin{proof}
		Let $W = W_1 \cup \cdots \cup W_m$ be an equipartition of $W$ such that $\cW = \left(W_1, \ldots, W_m \right)$ is a $\left(\delta - \varepsilon, m, t \right)$-connecting-system in $H$, for some $0 < \varepsilon < \delta/2 - 1/4$.
		We use these parts to tailor the $\ell$-paths in $\cP$ to one another.
		For every $i \in [m]$ let $X_i$ and $Y_i$ be the sets of vertices in the first and the last edges of the path $P_i$, respectively, and let $\overrightarrow{X_i}, \overrightarrow{Y_i}$ be the corresponding edges, ordered according to the path $P_i$.
		By \Cref{def:ConnSstm} (iii), for each $i \in [m]$, the induced subgraph $H_i \defined H[W_i \cup Y_i \cup X_{i+1}]$ (where $i$ is taken modulo $m$) has minimum co-degree at least $(\delta - 2\varepsilon)|H_i| > \frac{1}{2} \card{H_i}$.
		Hence, by \Cref{lem:HamCnctd} and \Cref{rem:HamCnctd} it is $\ell$-Hamiltonian connected.
		Let $Q_i$ be a Hamiltonian $\ell$-path in $H'_i$ with end-edges $\overrightarrow{Y_i}$ and $\overrightarrow{X}_{i+1}$.
		Then we get that $C \defined (P_1, Q_1, \ldots, P_m, Q_m)$ is a Hamiltonian $\ell$-cycle in $H$.
		
		Note that in $C$, any two paths $P_i, P_{i+1}$ are separated by the vertices of $W_i$, where $\cW$ is fixed in the process.
		Hence, given $\cW$, any two distinct $(\ell, m, V')$-path-systems form two distinct Hamiltonian $\ell$-cycles.
	\end{proof}

	\section{Proof of Theorem \ref{thm:main}}
	\label{sec:proof}
	We can now put all the ingredients together to quickly derive the proof of our main theorem.
	\begin{proof}[Proof of Theorem \ref{thm:main}]
		Let $n' \defined n - \log^4 n$.
		By \Cref{lem:ConnSstm} there exists a subset $W \subset V$ which is a $\left(\delta - o(1), m, t \right)$-connecting-set, where $m \defined \frac{n'}{n}\log^2 n$ and $t \defined \frac{n}{n'}\log^2 n$.
		Consider the remaining set of vertices $V' \defined V\setminus W$.
		Since $\card{W} = mt = \log^4 n$ we get that $\card{V'} = n'$.
		By \Cref{cor:ManyStms}, there are at least
		\begin{align*}
		(1-o(1))^n \cdot \Psi_k(n, \ell) \cdot \delta^{\frac{n}{k - \ell}}
		\end{align*}
		many $(\ell, m, V')$-path-systems covering $H[V']$.
		By \Cref{lem:PthsToCycl}, each such $(\ell, m, V')$-path-system can be completed to a Hamiltonian $\ell$-cycle in $H$ using the vertices in $W$, such that no two distinct $(\ell, m, V')$-path-system form the same cycle.
		Hence the statement is proved.
	\end{proof}

	\section{Concluding remarks and open problems}
	We highlight the natural barrier in our approach for extending our main result to tight Hamiltonian cycles.
	The main obstacle is extending \Cref{cor:ManyPM}, in which we use as a ``building block'' in our proof, to the case where $r = k-1$.
	More precisely, when restricting the first $k-1$ parts of a $k$-partite $k$-graph, one cannot use \Cref{thm:McDiarmid} to find many perfect matchings, as there is no ``random layer'' anymore.
	In order to solve the tight case, we believe that a different approach should be taken.
	However, it seems reasonable to believe that the number of tight Hamiltonian cycles one can find in a $\delta$-Dirac $k$-graph is consistent with our result for all other values of $\ell$.
	Hence we state here the following conjecture, which is a slight generalization of Conjecture 7.1 in~\cite{GGJKO}.
	\begin{conjecture}
		A $\delta$-Dirac $k$-graph $H$ on $n$ vertices contains at least $(1-o(1))^n \cdot n! \cdot \delta^n$ many tight Hamiltonian cycles.
	\end{conjecture}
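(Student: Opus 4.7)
The plan is to keep the three-stage architecture of the main theorem's proof---extract a small connecting set, count path-systems on the complement, and tailor each path-system into a Hamilton cycle---while replacing the partite-matching count of \Cref{sec:ellpaths}, which breaks at $\ell = k-1$, by a counting mechanism intrinsic to tight structures. The bookend steps still work off-the-shelf: \Cref{lem:ConnSstm} with $m, t = \mathrm{polylog}(n)$ produces a $(\delta - o(1), m, t)$-connecting-set $W$ of size $o(n/\log n)$, and \Cref{lem:PthsToCycl} applies verbatim at $\ell = k-1$, since its only input is $(k-1)$-Hamiltonian-connectedness of the glue-hypergraphs, which \Cref{lem:HamCnctd} supplies for $\delta > 1/2$. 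The crux is therefore to produce roughly $(1-o(1))^n \cdot n! \cdot \delta^n$ tight path-systems covering $V' \defined V \setminus W$.

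For this I would partition $V'$ into $m$ disjoint arcs of equal size $s = |V'|/m = \omega(\log n)$ and treat each arc independently; in contrast to the overlapping blocks of \Cref{sec:ellpaths}, the glue between arcs happens inside $W$ rather than between them. A Dirac-inheritance argument in the spirit of \Cref{lem:ManyPrtns} should show that a $(1-o(1))$-fraction of all equipartitions induces a $(\delta - o(1))$-Dirac hypergraph on each arc. The inner count then asks, in a $(\delta - o(1))$-Dirac $k$-graph on $s$ vertices, for a lower bound of $(1-o(1))^s \cdot s! \cdot \delta^s$ on the number of tight Hamilton paths between two prescribed end-$(k-1)$-tuples. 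If established, multiplying this bound over the $m$ arcs together with the $(1-o(1)) \cdot |V'|!/(s!)^m$ good equipartitions yields the conjectured $(1-o(1))^n \cdot n! \cdot \delta^n$, and \Cref{lem:PthsToCycl} converts these into pairwise distinct tight Hamilton cycles.

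The hard part, precisely as the authors flag, is the inner tight-Hamilton-path count, which is a miniature version of the conjecture itself (at scale $s = \omega(\log n)$ rather than $n$, and with prescribed ends). The partite-matching approach of \Cref{cor:ManyPM} degenerates because fixing $k-1$ coordinates of an edge leaves no permutation for \Cref{thm:McDiarmid} to average over. My best guess for a route forward is an entropy argument applied directly to a uniformly random tight Hamilton path: reveal vertices one at a time in the order they appear along the path and lower-bound the conditional entropy of the $i$-th vertex by $\log(\delta s) - o(\log s)$, using the Dirac condition to keep the number of legal continuations at least $(\delta - o(1))$ times the number of remaining vertices. The main technical obstacle is the final $o(s)$ steps, where the residual hypergraph may lose its Dirac-like structure and the conditional-entropy bound can collapse; overcoming this likely demands a deterministic absorbing reservoir engineered inside each arc so that extensions remain plentiful up to the penultimate vertex. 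This last-phase control is where I expect genuinely new ideas to be required, and it is almost certainly why the tight case has resisted the present framework.
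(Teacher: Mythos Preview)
The statement is a \emph{conjecture}; the paper does not prove it and explicitly flags the tight case $\ell = k-1$ as open, pointing out that \Cref{cor:ManyPM} cannot be run with $r = k-1$ because fixing all $k-1$ coordinates leaves no random permutation for \Cref{thm:McDiarmid} to average over. There is thus no proof in the paper to compare your proposal against.

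Your proposal is, accordingly, not a proof either, and to your credit you say so. You are right that the bookend steps (\Cref{lem:ConnSstm} and \Cref{lem:PthsToCycl}) go through unchanged at $\ell = k-1$, and you correctly isolate the bottleneck as the path-system count. The genuine gap---which you yourself name---is that your ``reduction'' to the inner count of roughly $(1-o(1))^s \cdot s! \cdot \delta^s$ tight Hamilton paths in a $(\delta - o(1))$-Dirac $k$-graph on $s$ vertices is not a reduction at all: it is the conjecture restated at scale $s$ (and, if you insist on prescribed ends, with an extra constraint). Nothing in the architecture makes the inner problem easier than the outer one. The entropy sketch is a plausible heuristic, but the endgame difficulty you flag is exactly where a greedy or entropy argument typically loses a factor exponential in $s$ rather than subexponential, so the unspecified ``absorbing reservoir inside each arc'' is carrying the entire weight of the argument. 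In short, your outline matches the paper's diagnosis of the obstacle and does not overcome it.
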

	
	Recall that for those values of $\ell$ for which $(k-\ell) \nmid k$ the necessary minimum co-degree in $k$-graph on $n$ vertices is $\mu^*_{k-1}(\ell, n) \defined \frac{n}{\lceil k/(k-\ell) \rceil (k-\ell)} < \frac{n}{2}$.
	It makes sense to ask for the number of Hamiltonian $\ell$-cycles in $k$-graphs with minimum co-degree larger than this threshold.
	\begin{question}
		Let $\ell, k , n$ be integers satisfying $(k-\ell) \nmid k$.
		Let $H$ be a $k$-graph on $n$ vertices with $\delta_{k-1} \ge \delta n$ for some $\delta > \mu^*_{k-1}(\ell, n)/n$.
		What is the number of Hamiltonian $\ell$-cycles in $H$?
	\end{question}
	
	One can also study Dirac-type problems in hypergraphs with respect to other notions of degrees:
	For a $k$-graph $H = (V,E)$ and a subset of $d$ vertices, $X \in \binom{V}{d}$, for some $1 \le d \le k-1$, we define the \emph{$d$-degree} of $X$ to be the number of edges in $E$ containing $X$.
	For example, one can ask what is the minimum co-degree condition which enforces a perfect matching in $k$-graph.
	For integers $n, k, d$ satisfying $1 \le d \le k-1$ we let $m_d(k,n)$ be the smallest integer $m$ such that any $k$-graph $H$ on $n$ vertices with $\delta_d(H) \ge m$ contains a perfect matching.
	$m_d(n,k)$ is unknown for most values of of $d$.
	Define further
	\[
	\mu_{d}\left(k\right)\defined \lim_{n\rightarrow \infty}m_{d}\left(k,n\right)/\binom{n-d}{k-d}
	\]
	to be the parameter that encodes the asymptotic behaviour of $m_{d}(k,n)$.
	Although true, it is not obvious that the limit exists, as was proved by Ferber and Kwan~\cite{FKw}.
	However, $\mu_d(k)$ is unknown for most values of $d,k$ (for example, even the case $d=1$ and $k=6$ is open).
	
	Given a $k$-graph $H$ on $n$ vertices satisfying the minimum $d$-degree condition for perfect matchings, one can ask how many of them can be found in $H$.
	As for $d=k-1$ the answer is the same as the expected number of perfect matchings in a random graph with the same edge-density, it is not clear if the same phenomenon occurs also in cases where $d < k-1$.
	Indeed, as was pointed out to the first author by Lisa Sauermann, this is not the case already for minimum $1$-degree in $3$-graphs, as can be shown in the following construction.
	Consider the bipartite $3$-graph $H$ on parts $X\cup Y$, where $\card{X} = n/3-1$ and $\card{Y} = 2n/3+1$, and all possible edges which intersect $X$ in at least one vertex.
	We note that $\delta_1(H) < \frac{5}{9} \binom{n}{2}$ which was proved in~\cite{HPS09} to be the correct threshold for minimum $1$-degree in $3$-graph.
	Clearly $H$ does not contain a perfect matching, since every set of disjoint edges has size at most $n/3-1$.
	Now, for $\varepsilon>0$ let $H_{\varepsilon}$ be the bipartite $3$-graph defined similarly to $H$, but with parts of size $\card{X} = (1/3+\varepsilon)n$ and $\card{Y} = (2/3-\varepsilon)n$. 
	$H_{\varepsilon}$ satisfies the minimum $1$-degree condition and hence contains a perfect matching.
	A calculation shows that for sufficiently small $\varepsilon>0$ $H_{\varepsilon}$ cannot contain more than $(1+o_{\varepsilon}(1))^n\frac{n!}{(n/3)! (3!)^{n/3}} \cdot \left(\frac{12}{27} \right)^{n/3}$ perfect matchings.
	Note that $H_{\varepsilon}$ has edge-density at least $5/9$, so the above number is smaller than the expected number of perfect matchings in random $3$-graph with the same edge-density.
	
	For $1 \le d < k$, define the function $f(d, k, n)$ to be the number of perfect matchings in a $k$-graph $H$ on $n$ vertices with $\delta_d(H) \ge \delta\binom{n-d}{k-d}$ where $\delta>\mu_d(k)$.
	It would be interesting to understand the behavior of this function for all values $0<d<k$.

	{\bf Acknowledgement.} The first author would like to thank Lisa Sauermann for sharing the observation that was discussed in the concluding remarks section.
	The third author is thankful to Trinity College of the University of Cambridge for the Trinity Internal Graduate Studentship funding.
	\bibliographystyle{abbrv}
	\bibliography{refs}

\end{document}